\newtheorem{theorem}{Theorem}[section]
\newtheorem{lemma}[theorem]{Lemma}
\newtheorem{corollary}[theorem]{Corollary}
\newtheorem{proposition}[theorem]{Proposition}
\theoremstyle{definition}
\theoremstyle{remark}
\newtheorem{remark}[theorem]{Remark}
\numberwithin{equation}{section}
\def\imod#1{\allowbreak\mkern3mu({\operator@font mod}\,\,#1)}
\newcommand{\bigO}{\mathcal{O}}
\newcommand{\ub}[1]{\underline{#1}}
\DeclarePairedDelimiter{\floor}{\lfloor}{\rfloor}
\DeclarePairedDelimiter{\fp}{\{}{\}}
\newcommand{\N}{\mathbb{N}} 
\begin{document}

\title[Explicit Versions of Burgess' Bound]{An Investigation Into Explicit Versions of Burgess' Bound}



\author[F. J. Francis]{Forrest J. Francis}
\address{School of Science \\
        UNSW Canberra at ADFA \\
       Northcott Drive, Campbell, ACT 2600 \\
        Australia}
\email{f.francis@student.adfa.edu.au}

\subjclass[2010]{Primary 11L40, 11Y60}
\keywords{Dirichlet Character, Burgess' bounds, non-residues, norm-Euclidean cyclic fields}

\date{}

\dedicatory{}

\begin{abstract}
    Let $\chi$ be a Dirichlet character modulo $p$, a prime. In applications, one often needs estimates for short sums involving $\chi$. One such estimate is the family of bounds known as \emph{Burgess' bound}. In this paper, we explore several minor adjustments one can make to the work of Enrique Trevi\~no \cite{trevino2015} on explicit versions of Burgess' bound. For an application, we investigate the problem of the existence of a $k$th power non-residue modulo $p$ which is less than $p^\alpha$ for several fixed $\alpha$. We also provide a quick improvement to the conductor bounds for norm-Euclidean cyclic fields found in \cite{lezowski2017}.
\end{abstract}
\maketitle

\section{Introduction}

Let $\chi$ be a Dirichlet character modulo $q$. 
It is often useful to know the size of \emph{short} character sums, i.e., sums of the form
\begin{equation}\label{scs}
   S_\chi(M,N) := \sum_{n=M+1}^{M+N} \chi(n)
\end{equation}
where $M$, $N$ are real numbers with $N < q$. A trivial bound for \eqref{scs} is simply $N$, since a Dirichlet character takes values which are either roots of unity or 0. On the other hand, we may estimate \eqref{scs} entirely in terms of $q$; the standard estimate in this direction is the P\'olya--Vinogradov inequality. The following explicit version of this inequality is due to Frolenkov and Soundararajan \cite{frolenkov2013}. 

\begin{theorem}{\textnormal{\cite[Theorem 2]{frolenkov2013}}}\label{explicitpv}
For a primitive Dirichlet character modulo $q>q_0$, we have
\[\left| S_\chi(M,N) \right| \leq \alpha_1\sqrt{q}\log{q} + \sqrt{q},\]
where
\begin{equation*}
(\alpha_1, q_0) = \begin{cases} 
\left(\frac{2}{\pi^2}, 1200\right) 
    & \text{if } \chi(-1) = 1 \\
&\\  
\left(\frac{1}{2\pi}, 40\right)
    & \text{if } \chi(-1)= -1.
\end{cases}
\end{equation*}

\end{theorem}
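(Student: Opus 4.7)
The plan is to follow the standard Gauss-sum Fourier expansion route to the P\'olya--Vinogradov inequality, tracking constants carefully and exploiting the parity of $\chi$ to separate the even and odd cases.

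First, since $\chi$ is primitive modulo $q$, I would begin from the expansion
\[\chi(n) = \frac{1}{\tau(\bar\chi)} \sum_{a=1}^{q-1} \bar\chi(a)\, e\!\left(\frac{an}{q}\right),\]
where $|\tau(\bar\chi)| = \sqrt{q}$. Substituting into \eqref{scs} and interchanging summations turns the inner $n$-sum into a geometric progression whose modulus is $|\sin(\pi a N/q)|/|\sin(\pi a/q)|$. Pairing $a$ with $q-a$ in the outer sum and using $\bar\chi(q-a) = \chi(-1)\bar\chi(a)$, the sum collapses to twice (resp.\ $2i$ times) a real sum over $1 \le a \le (q-1)/2$, with the shift $M$ absorbed into a cosine (even case) or sine (odd case) factor. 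This is where the two constants $2/\pi^2$ and $1/(2\pi)$ must come from: the even case benefits from an additional cancellation in the cosine weighting that is not present in the odd case.

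The main obstacle is extracting the sharp constants from the resulting trigonometric sum. A direct $\sum 1/a$ estimate is too crude. Instead, I would split the range of $a$ and combine (i) the elementary bounds $|\sin(\pi a/q)| \ge 2a/q$ and $|\sin(\pi a N/q)| \le 1$ in one regime with (ii) a Cauchy--Schwarz step, on the remainder, against a smoother approximation of the indicator of $(M, M+N]$. The constants $2/\pi^2$ and $1/(2\pi)$ should emerge from the $L^1$-cost of the optimal smoothing together with an Euler--Maclaurin evaluation of the resulting harmonic tail, with the character-sum input appearing only through Parseval's identity $\sum_a |\bar\chi(a)|^2 = \varphi(q)$.

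Finally, the threshold $q_0$ comes from absorbing the lower-order remainder terms into the single uniform summand $\sqrt{q}$ on the right. After one establishes a cleaner inequality with an explicit $\bigO(\sqrt{q}\log\log q)$ or $\bigO(\sqrt{q})$ error, a numerical check reveals the points $q_0 = 1200$ (even) and $q_0 = 40$ (odd) at which the absorption of $\log\log q$ into $\log q$, with the given leading coefficients, becomes valid.
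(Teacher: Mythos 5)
The paper does not prove this statement at all; it is Theorem 2 of Frolenkov--Soundararajan \cite{frolenkov2013} cited verbatim, and the body of the present paper only sketches a modification of their argument to obtain Corollary \ref{pvlargeq}. Your high-level opening---Gauss sum Fourier expansion, interchange of summation to produce $|\sin(\pi a N/q)|/|\sin(\pi a/q)|$, pairing $a$ with $q-a$ and splitting on $\chi(-1)$---does match the standard starting point of the Frolenkov--Soundararajan proof, so the framing is fine.

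However, the rest of the sketch has two real problems. First, a factual slip: $\frac{1}{2\pi} \approx 0.159 < \frac{2}{\pi^2} \approx 0.203$, so it is the \emph{odd} (sine-weighted) case that enjoys the extra cancellation and the smaller leading constant; you assert the opposite. Second, and more seriously, the mechanism you propose for extracting the constants is not strong enough. A Cauchy--Schwarz step against a smoothed indicator, feeding the character into Parseval via $\sum_a|\bar\chi(a)|^2 = \varphi(q)$, cannot reproduce $\frac{2}{\pi^2}$ or $\frac{1}{2\pi}$: after the Gauss-sum normalization one trivially has $|S_\chi| \le q^{-1/2}\sum_a |\sin(\pi a/q)|^{-1} \sim \frac{2}{\pi}\sqrt{q}\log q$, and Cauchy--Schwarz with Parseval only reshuffles the absolute-value loss rather than recovering the factor of $\pi$. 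What actually produces the sharp leading constants in Pomerance's and Frolenkov--Soundararajan's work is a specific Fej\'er-type kernel with a two-parameter optimization, used so that the oscillation of $\bar\chi(a)$ is exploited directly (not averaged away by an $L^2$ bound) and the resulting main term is evaluated exactly; the parity split then changes which kernel component survives. The final numerical thresholds $q_0 = 1200$ and $q_0 = 40$ then come from tracking the explicit secondary terms of that kernel, not from a generic absorption of an $O(\sqrt{q}\log\log q)$ error. As written, your sketch would not reach the stated constants.
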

In this work, we are concerned with characters of large modulus. Following the proof of Theorem \ref{explicitpv} provided in \cite[pg.\! 278]{frolenkov2013} closely, we may take a constant smaller than 1 in front of $\sqrt{q}$ whenever $q$ is bounded below. For an adjusted version of Theorem \ref{explicitpv} that does not bound $q$ below, see \cite[Lemma 3]{lapkova2018}. 

\begin{corollary}\label{pvlargeq}
Let $\chi$ be a primitive Dirichlet character modulo $q > q_0$, and $\alpha_1$ be the constant in Theorem \ref{explicitpv}. Then,

\begin{equation*}
    \left| S_\chi(M,N) \right| \leq \alpha_1\sqrt{q}\log{q} +  \alpha_2\sqrt{q},
\end{equation*}
where 
\[\alpha_2 := \alpha_1\cdot\begin{cases} \left(\log\left(\frac{\pi^4}{16} + \frac{5.075\pi^2}{\sqrt{q_0}} +\frac{103.0225}{q_0}\right) + 2.8650\right)  & \text{if } \chi(-1) = 1 \\  
&\\  
\left(\log\left(\pi^2 + \frac{20.30\pi}{\sqrt{q_0}} +\frac{103.0225}{q_0}\right) + 2.8650\right)  & \text{if } \chi(-1) = 1.
\end{cases}\]

\end{corollary}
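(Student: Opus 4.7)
The plan is to revisit the proof of Theorem \ref{explicitpv} on p.~278 of \cite{frolenkov2013} and refine one intermediate step under the stronger hypothesis $q > q_0$. The Frolenkov--Soundararajan argument proceeds via the Gauss-sum expansion $\chi(n) = \tau(\bar\chi)^{-1} \sum_a \bar\chi(a) e^{2\pi i a n/q}$, combined with the geometric evaluation of $\sum_{n=M+1}^{M+N} e^{2\pi i a n/q}$ and cosecant-sum estimates. The key point is that after gathering terms one arrives at an inequality of the form
\[
   \left|S_\chi(M,N)\right| \leq \alpha_1 \sqrt{q}\log q + \sqrt{q}\, F(q),
\]
where $F(q)$ is an explicit non-negative function that is monotonically decreasing in $q$; in \cite{frolenkov2013} one bounds $F(q) \leq 1$ uniformly for $q > q_0$, producing the stated additive $\sqrt{q}$.

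I would instead retain the sharper pointwise bound $F(q) \leq F(q_0)$ for all $q > q_0$, and then rearrange $F(q_0)$ into the form $\alpha_1(\log g(q_0) + C)$. The argument $g(q_0)$ inside the log should capture three orders of decay in $q_0$, namely a main Gauss-sum contribution (which scales as $(\pi^2/4)^2 = \pi^4/16$ when $\chi(-1)=1$ and as $\pi^2$ when $\chi(-1)=-1$), together with two lower-order cross-terms whose coefficients --- $5.075\pi^2$, $20.30\pi$, $103.0225$ --- should emerge by expanding the relevant square of the Frolenkov--Soundararajan intermediate bound. The additive constant $C = 2.8650$ will be the numerical residue of the absolute constants carried through their partial cosecant-sum lemmas. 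The even/odd split in the statement of the corollary matches the even/odd split of $\chi$ in the Gauss-sum factor, which is the only place parity enters.

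The main obstacle is purely clerical rather than analytic: Frolenkov and Soundararajan have optimised their bounds so as to collapse all lower-order contributions into the single additive $\sqrt{q}$, which means that the $q$-dependent form of $F$ must be reconstructed by re-running their proof symbolically and deferring every numerical evaluation to the very end. No new analytic idea is required; the only new observation is the monotonicity of $F$ in $q$, which lets one replace the uniform bound $F(q)\leq 1$ by the sharper $F(q) \leq F(q_0) < 1/\alpha_1$, yielding $\alpha_2 < 1$ in both parity cases.
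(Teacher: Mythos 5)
Your proposal matches the paper's approach exactly: the paper itself provides no separate proof of the corollary but simply remarks that one should follow the Frolenkov--Soundararajan argument on p.~278 of \cite{frolenkov2013} closely and, instead of collapsing the $q$-dependent remainder to the uniform bound $1$, retain it as a decreasing function of $q$ evaluated at $q_0$. Your observation that the argument of the logarithm is a perfect square, $\bigl(\tfrac{\pi^2}{4} + \tfrac{10.15}{\sqrt{q_0}}\bigr)^2$ in the even case and $\bigl(\pi + \tfrac{10.15}{\sqrt{q_0}}\bigr)^2$ in the odd case, correctly reverse-engineers the Frolenkov--Soundararajan intermediate inequality; the only slight overstatement is the final remark that this yields $\alpha_2 < 1$ \emph{in both parity cases}, which (as the paper notes) only holds once $q_0 \geq 854$.
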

For either parity, $\alpha_2 \leq 1$ for $q_0 \geq 854$. However, these savings are slight, even for very large $q_0$, since the limiting value of $\alpha_2$ is greater than $0.9466$ (for even $\chi$) and $0.8203$ (for odd $\chi$). 

Between the trivial estimate, which is entirely in terms of $N$, and the P\'olya--Vinogradov inequality, which is entirely in terms of $q$, we have a family of hybrid estimates due to D. A. Burgess (see, e.g. \cite{burgess1962}, \cite{burgess1963}, \cite{burgess1986}) which take the following form if $q = p$, a prime. 

\begin{theorem}{\textnormal{\cite[Theorem 1]{burgess1963}}}
Let $\chi$ be a non-principal character modulo $p$. Then, if $r$ is a positive integer,  
\[S_\chi(M,N) \ll N^{1-\frac{1}{r}}p^\frac{r+1}{4r^2}\log{p},\]
for any non-negative integers $M,N$.  
\end{theorem}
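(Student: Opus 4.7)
The plan is to prove the bound using Burgess' three-step method: a shift-averaging step, an application of Hölder's inequality, and the Weil bound for multiplicative character sums of rational functions over $\mathbb{F}_p$. Auxiliary parameters $A$ and $B$ are introduced and optimized at the end.

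First, one observes that for any $a \in \{1, \ldots, A\}$ with $\gcd(a,p) = 1$ and any $b \in \{1, \ldots, B\}$, the interval of summation $(M, M+N]$ can be shifted to $(M + ab, M + N + ab]$ at a cost of only $O(AB)$. Averaging over the pairs $(a, b)$ and invoking the complete multiplicativity identity $\chi(n + ab) = \chi(a) \chi(b + n \bar{a})$ (where $\bar{a}$ is the inverse of $a$ modulo $p$) yields
\begin{equation*}
|S_\chi(M,N)| \leq \frac{1}{AB} \sum_{n=M+1}^{M+N} \biggl| \sum_{a=1}^A \chi(a) \, V(n \bar{a}) \biggr| + O(AB),
\end{equation*}
where $V(k) := \sum_{b=1}^B \chi(k + b)$. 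A double application of Hölder's inequality (once to the sum over $n$ of length $N$, once to the sum over $a$), together with the observation that the map $n \mapsto n \bar{a} \bmod p$ is injective on $(M, M+N]$ when $N \leq p$, reduces the problem to bounding the $2r$-th moment $\sum_{k \in \mathbb{F}_p} |V(k)|^{2r}$.

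The arithmetic heart of the argument is estimating this moment. Expanding $|V(k)|^{2r}$ as a $2r$-fold sum over $(b_1, \ldots, b_{2r}) \in \{1, \ldots, B\}^{2r}$ and interchanging the order of summation produces inner sums of the form $\sum_{k \in \mathbb{F}_p} \chi(f_{\mathbf{b}}(k))$, where $f_{\mathbf{b}}(k) = (k+b_1) \cdots (k+b_r) (k+b_{r+1})^{-1} \cdots (k+b_{2r})^{-1}$. Weil's bound (invoked here as a black box) gives $O(r \sqrt{p})$ for each such sum unless $f_{\mathbf{b}}$ is a perfect $d$-th power in $\mathbb{F}_p(k)$, where $d$ is the order of $\chi$; in this degenerate case the inner sum is $p + O(r)$. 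A combinatorial count of the degenerate tuples then yields
\begin{equation*}
\sum_{k \in \mathbb{F}_p} |V(k)|^{2r} \ll r! \, B^r p + r (2B)^{2r} \sqrt{p}.
\end{equation*}

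Assembling all the estimates and optimizing the parameters (roughly $B \asymp p^{1/(2r)}$ and $A$ chosen so that the shift error $AB$ is comparable to the main term) produces the Burgess exponent $N^{1-1/r} p^{(r+1)/(4r^2)}$, with the $\log p$ factor arising from boundary adjustments or from more careful tracking of the combinatorial counts. The main technical difficulty is expected to lie in the combinatorial enumeration of degenerate tuples in the moment bound and in the tight balancing of $A$ and $B$ against the shift error; the Weil bound itself is the deep input but is invoked as a black box.
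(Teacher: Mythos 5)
Your architecture (shift averaging, H\"older, Weil moment bound, optimize $A$ and $B$) is the classical Burgess method, which is also what the paper's explicit Theorems~\ref{1overr} and~\ref{1over2r} carry out; the paper itself states Burgess' theorem as a citation without proof. The Weil-moment step in your sketch is fine --- the paper uses Trevi\~no's explicit version, Theorem~\ref{trevinoweil}, of the same estimate, and the exact constants you wrote ($r!$, $(2B)^{2r}$) do not matter at the level of~$\ll$.

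The gap is in the H\"older step. Applying H\"older once in $n$ and once in $a$, and then using injectivity of $n\mapsto n\bar a$ to pass from $\sum_n$ to $\sum_{k\bmod p}$, yields $\sum_{n}\bigl|\sum_a \chi(a)V(n\bar a)\bigr| \leq A\,N^{1-1/(2r)}\,W^{1/(2r)}$ with $W=\sum_k|V(k)|^{2r}$. After dividing by $AB$ the parameter $A$ cancels entirely, and with the optimal $B\asymp p^{1/(2r)}$ the main term is $\asymp N^{1-1/(2r)}p^{1/(4r)}$; this exceeds the trivial bound $N$ for every $N<p^{1/2}$, so the argument as sketched never beats trivial in Burgess' range. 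Equivalently, your double H\"older amounts to inserting the trivial estimate $V_2\leq A^2N$ into the chain $V\leq V_1^{1-1/r}V_2^{1/(2r)}W^{1/(2r)}$, where $v(x)$ counts pairs $(a,n)$ with $1\leq a\leq A$, $M<n\leq M+N$, $n\equiv ax\pmod p$, $V_1=\sum_x v(x)=AN$, and $V_2=\sum_x v(x)^2$. The missing --- and essential --- ingredient is the quadruple-counting lemma $V_2\ll AN\bigl(AN/p+\log A\bigr)$ (Lemma~\ref{v2trev} in the paper), which counts quadruples $(a_1,a_2,n_1,n_2)$ with $a_1n_2\equiv a_2n_1\pmod p$ using a divisor/\textrm{gcd} argument. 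This is what replaces the lost factor $A^{1/(2r)}$ by $(\log A)^{1/(2r)}$ and produces the Burgess exponent $N^{1-1/r}p^{(r+1)/(4r^2)}$; it is also the genuine source of the $\log p$, not ``boundary adjustments or more careful tracking of the combinatorial counts'' as you suggested.
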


Proving Burgess' bound requires the power of an estimate derived from the Weil bound \cite{weil1948}. For our purposes, we use an explicit variant of this estimate established by Trevi\~no \cite{trevino2015}. 

\begin{theorem}{\textnormal{\cite[Theorem 1.2]{trevino2015}}}\label{trevinoweil}
Let $p$ be a prime, $r$ a positive real number, and $B$ a positive real number satisfying $r \leq 9B$. Let $\chi$ be a non-principal character modulo $p$. Then 
\[\sum_{x\imod{p}} \left|\sum_{1\leq b \leq B} \chi(x+b)\right|^{2r} \leq (2r-1)!!B^r p + (2r-1)B^{2r} \sqrt{p},\]
where $(2r-1)!! = (2r-1)(2r-3)\dots(1)$. 
\end{theorem}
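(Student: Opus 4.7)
The plan is to expand the $2r$-th moment into a sum over $2r$-tuples of shifts, swap the order of summation, and apply Weil's bound to each inner character sum. Writing $S(x) := \sum_{1 \le b \le B}\chi(x+b)$ and $d$ for the order of $\chi$, the identity $\overline{\chi} = \chi^{d-1}$ gives
\[
  \sum_{x \imod p} \lvert S(x)\rvert^{2r}
  = \sum_{\mathbf{a},\mathbf{b} \in [1,B]^r} \sum_{x \imod p} \chi\!\left(F_{\mathbf{a},\mathbf{b}}(x)\right), \qquad F_{\mathbf{a},\mathbf{b}}(x) := \prod_{i=1}^{r}(x+a_i)\prod_{j=1}^{r}(x+b_j)^{d-1},
\]
and the outer sum partitions into a \emph{diagonal} part (tuples for which $F_{\mathbf{a},\mathbf{b}}$ is a perfect $d$-th power in $\mathbb{F}_p[x]$) and an \emph{off-diagonal} part.

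For a diagonal tuple, $\chi \circ F_{\mathbf{a},\mathbf{b}}$ takes values only in $\{0,1\}$, so each inner sum is trivially at most $p$. The $d$-th power condition is equivalent to the system of congruences $n_c(\mathbf{a}) \equiv n_c(\mathbf{b}) \pmod d$ for every $c \in [1,B]$, where $n_c$ denotes multiplicity. To bound the number of such tuples I would use a matching over-count: pick a perfect matching of the $2r$ indices (of which there are $(2r-1)!!$) that forces paired indices to carry equal values, then pick the common value from $[1,B]$ for each pair. This yields at most $(2r-1)!!\,B^r$ diagonal tuples, contributing at most $(2r-1)!!\,B^r\,p$ in total.

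For an off-diagonal tuple, $F_{\mathbf{a},\mathbf{b}}$ is not a $d$-th power, and Weil's bound gives $\bigl|\sum_{x \imod p} \chi(F_{\mathbf{a},\mathbf{b}}(x))\bigr| \le (D-1)\sqrt{p}$, where $D$ is the number of distinct roots of $F_{\mathbf{a},\mathbf{b}}$. Since every such root lies in $\{a_i\} \cup \{b_j\}$ we have $D \le 2r$; there are at most $B^{2r}$ tuples in all, so the off-diagonal contribution is bounded by $(2r-1)\, B^{2r}\, \sqrt{p}$. Summing the two contributions gives the stated inequality.

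The main obstacle is justifying the diagonal count $(2r-1)!!\,B^r$ uniformly in $d$. The matching argument is clean when the $d$-th power condition reduces to multiset equality (as when $d > r$) or to an even-combined-multiplicity condition (as when $d=2$), but for intermediate $d$ one must check that every solution of the congruence system $n_c(\mathbf{a}) \equiv n_c(\mathbf{b}) \pmod d$ remains representable by such a matching, and the hypothesis $r \le 9B$ is where I would expect this combinatorial control to enter, keeping the over-count small enough that the constant $(2r-1)!!$ survives. A secondary technical point is invoking Weil's bound in the correct form, dependent on the number of distinct roots of $F_{\mathbf{a},\mathbf{b}}$ and on the non-$d$-th-power hypothesis rather than a weaker non-constancy hypothesis.
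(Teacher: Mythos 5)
This statement is quoted verbatim from Trevi\~no \cite{trevino2015} and is not reproved in the present paper, so the comparison is really against the proof in \cite{trevino2015}. Your skeleton --- expand the $2r$th moment over $2r$-tuples, isolate the tuples for which $F_{\mathbf a,\mathbf b}$ is a $d$th power, bound those inner sums trivially by $p$, and hit the rest with Weil's $(m-1)\sqrt p$ bound with $m\le 2r$ --- is the standard one and is in the same spirit as Trevi\~no's argument, and your reading of the Weil hypothesis (``not a constant times a $d$th power'', $m$ counting distinct roots, hence $(2r-1)\sqrt p$) is correct.

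The gap you flagged is, however, a real one and the matching argument does not close it. A perfect matching on $[2r]$ with equal values within each pair can only produce tuples in which every value has even \emph{combined} multiplicity $n_c(\mathbf a)+n_c(\mathbf b)$, whereas the $d$th-power condition is $n_c(\mathbf a)\equiv n_c(\mathbf b)\pmod d$; these parities agree for even $d$ (and the condition collapses to multiset equality when $d>r$), but disagree for odd $d\in\{3,5,\dots\}$ with $d\le r$. Concretely, for $d=3$, $r=3$, $\mathbf a=(c,c,c)$, $\mathbf b=(c',c',c')$ with $c\ne c'$: here $F(x)=(x+c)^3(x+c')^6$ is a cube, so the tuple is diagonal, yet each value has odd combined multiplicity $3$ and is produced by no matching. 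So ``at most $(2r-1)!!\,B^r$ diagonal tuples'' is not established by your count; the extra tuples must be bounded separately (and this is where one actually needs $r\le 9B$, with a careful count --- essentially a partition/multinomial argument rather than a matching overcount). A secondary point: the theorem allows $r$ to be a positive real, while an expansion into $2r$-tuples and the identity $\overline\chi=\chi^{d-1}$ only make literal sense for integer $r$; either an interpolation step or a restriction to integer $r$ is needed to match the stated generality.
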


This estimate is used in \cite{trevino2015}  to establish an explicit version of Burgess' bound for prime moduli. In particular, the following is determined.

\begin{theorem}{\textnormal{\cite[Theorem 1.4]{trevino2015}}}\label{1overr}
Let $p$ be a prime and $2 \leq r \leq 10$ be an integer. Let $\chi$ be a non-principal character modulo $p$. Let $M$ and $N$ be non-negative integers. Let $p_0$ be a positive real number. Then, for $p \geq p_0$, we can determine a constant $c(r)$ depending on $p_0$ and $r$ such that
\begin{equation*}
\left| S_\chi(M,N) \right| < c(r) N^{1-\frac{1}{r}}p^\frac{r+1}{4r^2}\left(\log{p}\right)^\frac{1}{r}.
\end{equation*}
\end{theorem}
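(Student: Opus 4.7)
I would follow the classical Burgess argument, taking extra care to keep every constant explicit as a function of $p_0$ and $r$. The plan starts with the \emph{shift-and-average} trick: for integers $1\le a\le A$ (all coprime to $p$ once $A<p$) and $1\le b\le B$, translating the summation interval from $(M,M+N]$ to $(M+ab, M+ab+N]$ changes $S_\chi$ by at most $2ab$ boundary terms, so averaging gives
\[
 AB\,|S_\chi(M,N)| \;\le\; \left|\sum_{n=M+1}^{M+N}\sum_{a=1}^A\sum_{b=1}^B \chi(n+ab)\right| + \tfrac{1}{2}A^2B^2.
\]
Using multiplicativity on $\chi(n+ab)=\chi(a)\chi(\bar a n + b)$ and pulling $|\chi(a)|\le 1$ outside, I would collect the pairs $(n,a)$ according to the value $y\equiv \bar a n \pmod p$ via a multiplicity function $V(y)$, obtaining $\sum_y V(y)|T(y)|$ where $T(y)=\sum_{b=1}^B\chi(y+b)$.

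Next I would apply the standard three-exponent H\"older inequality with exponents $\bigl(\tfrac{r}{r-1},\,2r,\,2r\bigr)$ to get
\[
 \sum_y V(y)|T(y)| \;\le\; \Bigl(\sum_y V(y)\Bigr)^{1-\frac1r}\Bigl(\sum_y V(y)^2\Bigr)^{\frac{1}{2r}}\Bigl(\sum_y |T(y)|^{2r}\Bigr)^{\frac{1}{2r}}.
\]
The first moment is exactly $NA$, and the second moment counts quadruples $(n_1,n_2,a_1,a_2)$ with $n_1a_2\equiv n_2 a_1\pmod p$; for each $(a_1,a_2)$, the number of $(n_1,n_2)\in(M,M+N]^2$ satisfying this congruence is at most $N\bigl(\lfloor N/p\rfloor+1\bigr)$, yielding the explicit bound $\sum V^2 \le NA^2 + N^2A^2/p$ (or a small sharpening thereof). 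The third factor is supplied directly by Theorem~\ref{trevinoweil}, provided $r\le 9B$ is maintained during the optimisation.

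With these three estimates in hand, the inequality reduces to an explicit expression in $A$, $B$, $N$, $p$ and $r$. I would then choose $B$ of order $p^{1/(2r)}$ (to balance the two terms in Treviño's Weil bound) and $A$ of order $N^{1/r}p^{(r-1)/(2r^2)}$ (to balance the main Burgess term against the $A^2B^2$ error from the shift), upon which all powers of $N$, $A$, $B$ collapse to give the claimed shape $N^{1-1/r}p^{(r+1)/(4r^2)}(\log p)^{1/r}$, with the $(\log p)^{1/r}$ absorbing a sum over dyadic ranges of $A$ (the usual averaging over $A$ in Burgess, or a direct dyadic decomposition of $N$).

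The main obstacle is not conceptual but bookkeeping: to produce a clean closed-form $c(r)$ depending on $p_0$ one must (i) round $A$ and $B$ to integers and bound the rounding loss in terms of $p_0^{-\epsilon}$, (ii) verify that the ``secondary'' Weil term $(2r-1)B^{2r}\sqrt p$ really is dominated by the main $(2r-1)!!\,B^rp$ term for all $r\le 10$ and $p\ge p_0$, and (iii) ensure the final bound beats the trivial estimate $N$ so that the regime in which the stated inequality is non-vacuous is self-consistent. Tracking these three ingredients simultaneously, and choosing whether to absorb loss into $c(r)$ or to sharpen the Hölder step by a more delicate treatment of $\sum V^2$, is where most of the effort lies.
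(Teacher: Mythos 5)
Your plan follows the same Burgess shift-and-average-then-H\"older framework as the paper's proof (which, following Trevi\~no, is rewritten here only for $r=2$; for $r\geq 3$ the paper cites Trevi\~no's $c(r)$ directly). The H\"older decomposition into $V_1$, $V_2$, $W$, the choice $B\sim p^{1/(2r)}$, and the appeal to Theorem~\ref{trevinoweil} all match. But there are two genuine gaps in the proposal.

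First, you handle the boundary error from the shift $n\mapsto n+ab$ trivially, bounding each discrepancy by $2ab$ and arriving at a $\tfrac12 A^2B^2$ term. After dividing by $\lfloor A\rfloor\lfloor B\rfloor$ this leaves an error of order $AB$, and with your own choices $A\sim N^{1/r}p^{(r-1)/(2r^2)}$, $B\sim p^{1/(2r)}$ this is \emph{not} dominated by $N^{1-1/r}p^{(r+1)/(4r^2)}(\log p)^{1/r}$ in the regime where Burgess is useful (for $r=2$ the comparison fails outright). The paper --- following Burgess and Trevi\~no --- avoids this by an \emph{induction on $N$}: one assumes the bound for all shorter lengths and uses it to estimate $E(ab)=\max_K|S_\chi(K,ab)|$, so the boundary contribution is $\frac{2r^2}{(2r-1)^2}\,c(r)(AB)^{1-1/r}p^{(r+1)/(4r^2)}(\log p)^{1/r}$ (up to rounding factors), a controllable fraction of the target. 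The induction is anchored on both ends: the trivial bound for $N$ small and the explicit P\'olya--Vinogradov inequality (Corollary~\ref{pvlargeq}) for $N$ large. Your proposal mentions neither the recursion nor the P\'olya--Vinogradov endgame, and without them the argument does not close.

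Second, your second-moment bound $V_2\leq NA^2 + N^2A^2/p$, while correct, is too crude to give the Burgess exponent. Feeding it into the H\"older estimate with $B\sim p^{1/(2r)}$ produces a main term of shape $N^{1-1/(2r)}p^{1/(4r)}$, which is weaker than $N^{1-1/r}p^{(r+1)/(4r^2)}$ by a factor $(N/p^{1/(2r)})^{1/(2r)}>1$ throughout the range $N>p^{(r+1)/(4r)}$ where Burgess is nontrivial. The needed refinement is the gcd-structured count $V_2\lesssim AN\log A + A^2N^2/p$ of Lemma~\ref{v2trev} (sharpened further in Lemma~\ref{v2bound}), which exploits the rational structure of solutions to $a_1 n_2\equiv a_2 n_1\pmod p$ rather than bounding each pair $(a_1,a_2)$ uniformly. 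Incidentally, the $(\log p)^{1/r}$ factor in the theorem originates from this $\log A$ term through $V_2^{1/(2r)}$, not from a dyadic decomposition of $N$ as you suggest.
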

The exponent on the $\log{p}$ can be improved by placing a mild condition on the size of $N$. 
\begin{theorem}{\textnormal{\cite[Theorem 1.6]{trevino2015}}}\label{1over2r}
Let $p$ be a prime and $2 \leq r \leq 10$ be an integer. Let $\chi$ be a non-principal character modulo $p$. Let $M$ and $N$ be non-negative integers with $1 \leq N \leq 2p^{\frac{1}{2}+\frac{1}{4r}}$. Let $p_0$ be a positive real number. Then, for $p \geq p_0$, we can determine a constant $C(r)$ depending on $p_0$ and $r$ such that
\begin{equation*}
\left| S_\chi(M,N) \right| < C(r) N^{1-\frac{1}{r}}p^\frac{r+1}{4r^2}\left(\log{p}\right)^\frac{1}{2r}.
\end{equation*}
\end{theorem}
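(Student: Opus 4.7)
The plan is to follow Trevi\~no's proof of Theorem \ref{1overr} and adjust it to exploit the length hypothesis $N \leq 2p^{1/2+1/(4r)}$. First I would introduce the multiplicative shift: for integers $1 \leq a \leq A$ and $1 \leq b \leq B$ (with $A, B$ parameters to be chosen),
\[
S_\chi(M,N) = \sum_{n=M+1}^{M+N}\chi(n+ab) + O(ab).
\]
Averaging over $a,b$ and using $\chi(n+ab) = \chi(a)\chi(a^{-1}n + b)$ (valid since $\gcd(a,p)=1$ for $a<p$) yields
\[
AB\cdot|S_\chi(M,N)| \leq \sum_{y \imod{p}} U(y)\bigl|T(y)\bigr| + O(A^2B^2),
\]
where $T(y):=\sum_{b=1}^{B}\chi(y+b)$ and $U(y)$ counts pairs $(a,n)$ with $1 \leq a \leq A$, $M+1 \leq n \leq M+N$, and $n \equiv ay\imod{p}$.

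Next I would apply H\"older's inequality with exponents $2r/(2r-1)$ and $2r$: the second factor $\sum_y |T(y)|^{2r}$ is bounded directly by Theorem \ref{trevinoweil}, while the first factor $\sum_y U(y)^{2r/(2r-1)}$ is controlled by interpolating between $\sum_y U(y) = AN$ and $\sum_y U(y)^2$. The latter sum counts solutions $(a_1,n_1,a_2,n_2)$ to $a_2n_1 \equiv a_1n_2\imod{p}$ and splits into a ``small'' contribution (integer-valued solutions, giving a divisor-like sum of size $\lesssim AN\log A$) and a ``large'' contribution (nontrivial mod-$p$ solutions, of size $\lesssim A^2N^2/p$). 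With the Burgess-style parameter choices and the shift error $A^2B^2/(AB) = AB$ balanced against the main term, this reproduces Theorem \ref{1overr}, with the $(\log p)^{1/r}$ factor ultimately arising from the divisor piece.

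The length hypothesis $N \leq 2p^{1/2+1/(4r)}$ places us in precisely the regime where Burgess transitions to P\'olya--Vinogradov, and here the optimal $B$ sits in a range that makes the ``small'' and ``large'' contributions to $\sum_y U(y)^2$ of comparable size. In this regime, one may apply a Cauchy--Schwarz maneuver that absorbs the $\log A$ factor inside the $(2r)$-th moment bracket before invoking Theorem \ref{trevinoweil}, so that after extracting a $(2r)$-th root only $(\log p)^{1/(2r)}$ (rather than $(\log p)^{1/r}$) emerges outside. The main obstacle is the explicit book-keeping: for each integer $2 \leq r \leq 10$ one must verify that the secondary term $(2r-1)B^{2r}\sqrt{p}$ in Theorem \ref{trevinoweil} remains subdominant, that the parameter constraints (in particular $AB<p$) hold, and that the ``small/large'' split of $\sum_y U(y)^2$ is sharp enough under the hypothesis on $N$ for the logarithmic saving to be realized in every case. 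No conceptual ingredient beyond those already used for Theorem \ref{1overr} is expected.
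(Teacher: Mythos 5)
Your outline correctly identifies the general Burgess machinery (multiplicative shift, averaging over $h=ab$, a double application of H\"older's inequality to reduce to $V_1,V_2$ and the $2r$-th moment, and Theorem \ref{trevinoweil} for the moment), and this much does match the paper's proof. But there are two genuine gaps.

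First, and most importantly, your account of \emph{why} the logarithmic exponent drops from $1/r$ to $1/(2r)$ is not correct. You assert that ``the optimal $B$ sits in a range that makes the small and large contributions to $\sum_y U(y)^2$ of comparable size'' and that a ``Cauchy--Schwarz maneuver absorbs the $\log A$ factor inside the $(2r)$-th moment bracket.'' No such maneuver occurs, and it is not the mechanism. What actually happens is that the hypothesis $N\leq 2p^{\frac12+\frac1{4r}}$, combined with the Burgess-optimal choice $B\sim p^{1/(2r)}$ and $A=kN/B$, puts you in the regime $2AN<p$. In that regime one can invoke Lemma \ref{cutv2bound} rather than Lemma \ref{v2bound}: the $V_2$ bound then has \emph{no} $A^2N^2/p$ term at all, i.e.\ the ``large'' contribution in your small/large split is eliminated, not balanced. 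What remains is $V_2\lesssim AN\log(\cdot)$ with the argument of the logarithm of polynomial size in $p$, so $V_2^{1/(2r)}$ contributes a single factor of $(\log p)^{1/(2r)}$. In the unrestricted Theorem \ref{1overr}, with $N$ allowed up to the P\'olya--Vinogradov crossover $\nu_2(p)\sim p^{5/8}\log p$, the $A^2N^2/p$ piece is of order $(\log p)^2$ after normalization, and it is this term -- not the $\log A$ piece -- that produces $(\log p)^{1/r}$. So your diagnosis of which term causes the loss, and of how the hypothesis on $N$ removes it, is backwards.

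Second, your treatment of the shift error is too crude to close the argument. Writing $S_\chi(M,N)=\sum_{n}\chi(n+ab)+O(ab)$ and averaging gives an error of order $AB=kN$, which for $N$ near the upper end $2p^{\frac12+\frac1{4r}}$ dominates the target bound $N^{1-\frac1r}p^{\frac{r+1}{4r^2}}(\log p)^{\frac1{2r}}$. The paper instead writes the boundary terms as $2\theta(h)E(h)$ with $E(h)=\max_K|S_\chi(K,h)|$ and bounds $E(ab)$ by the \emph{inductive hypothesis}, i.e.\ the whole proof is an induction on $N$ with the trivial/P\'olya--Vinogradov bound serving as the base case outside the range \eqref{nbounds2}. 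This inductive structure is essential, not optional book-keeping. (Also, the side constraint you should be tracking is $2AN<p$, needed for Lemma \ref{cutv2bound}, not $AB<p$.)
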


By leveraging the $r=2$ case of Theorem \ref{1overr} against Theorem \ref{1over2r}, Trevi\~no notes that the restriction on $N$ in Theorem \ref{1over2r} can be omitted \cite{trevino2015}. 

\begin{corollary}\label{targetresult}
Let $p$ be a prime and $3 \leq r \leq 10$ be an integer. Let $\chi$ be a non-principal character modulo $p$. Let $M$ and $N$ be non-negative integers with $ 1 \leq N \leq 2p^{\frac{1}{2}+\frac{1}{4r}}$. Let $p_0 \geq 10^{10}$ be a positive real number. Then, for $p \geq p_0$, the constant $C(r)$ in Theorem \ref{1over2r} is such that
\begin{equation*}
\left| S_\chi(M,N) \right| < C(r) N^{1-\frac{1}{r}}p^\frac{r+1}{4r^2}\left(\log{p}\right)^\frac{1}{2r}.
\end{equation*}
\end{corollary}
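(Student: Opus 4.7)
The hypotheses of this corollary are a direct specialization of those of Theorem \ref{1over2r}: the range $1 \leq N \leq 2p^{\frac{1}{2}+\frac{1}{4r}}$ is identical, the class of characters is identical, and the only new conditions are the tighter index range $r \in \{3,\ldots,10\}$ (a proper subset of the $\{2,\ldots,10\}$ allowed in Theorem \ref{1over2r}) and the explicit floor $p_0 \geq 10^{10}$. Accordingly, my plan is to deduce the inequality by a direct appeal to Theorem \ref{1over2r} applied with this particular choice of $p_0$; no new analytic input is required, and the substance of the corollary lies in making the constant $C(r)$ explicit at the threshold.

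To pin down $C(r)$, I would trace through Trevi\~no's proof of Theorem \ref{1over2r} in \cite{trevino2015}. There, $C(r)$ is assembled from an optimization over an auxiliary shift parameter $B$ appearing in the Weil-type inequality of Theorem \ref{trevinoweil}, together with correction terms of the form $p_0^{-\delta}$ that decay as $p_0$ grows. For each $r \in \{3,\ldots,10\}$ I would carry out that one-variable optimization numerically at $p_0 = 10^{10}$, read off the resulting value of $C(r)$, and tabulate it. Since every $p_0$-dependent quantity in the expression for $C(r)$ is monotone decreasing in $p_0$, the value obtained at the threshold remains a valid constant for every prime $p \geq p_0 \geq 10^{10}$.

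The step that takes the most care is the bookkeeping of the explicit optimization at $p_0 = 10^{10}$: one must ensure that the $B$ which minimizes the expression is an admissible positive integer (or real, in the setup of Theorem \ref{trevinoweil}), that the side condition $r \leq 9B$ of Theorem \ref{trevinoweil} is respected, and that the correction terms are controlled uniformly. Conceptually, though, there is no obstacle beyond careful computation. The exclusion of $r = 2$ is explained by the fact that, under $p_0 \geq 10^{10}$, the $r = 2$ constant coming out of Theorem \ref{1over2r} is inferior to the one from the $r = 2$ case of Theorem \ref{1overr}, whereas for $r \geq 3$ Theorem \ref{1over2r} is the sharper tool, making it the natural statement to isolate as a corollary for the applications that follow.
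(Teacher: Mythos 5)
You have read the corollary literally as a specialization of Theorem \ref{1over2r} to $3 \le r \le 10$ and $p_0 \ge 10^{10}$, in which case there is nothing to prove. But that reading misses the content: as the sentence introducing the corollary makes clear, the point (following Trevi\~no's corresponding corollary) is that the restriction $N \le 2p^{\frac{1}{2}+\frac{1}{4r}}$ can be \emph{omitted} --- the displayed hypothesis on $N$ in the statement is a slip, and the substance lies in handling $N > 2p^{\frac{1}{2}+\frac{1}{4r}}$. The mechanism, which your proposal never touches, is to leverage the $r=2$ case of Theorem \ref{1overr}: for $p \ge p_0$ one has $|S_\chi(M,N)| < c(2;p_0)\,N^{1/2}p^{3/16}(\log p)^{1/2}$ with no upper restriction on $N$, and since $1/2 < 1-\frac{1}{r}$ for $r \ge 3$, this is majorized by $C(r;p_0)\,N^{1-\frac{1}{r}}p^{\frac{r+1}{4r^2}}(\log p)^{\frac{1}{2r}}$ once $N$ exceeds the crossover value $(c(2;p_0)/C(r;p_0))^{\frac{2r}{r-2}}p^{\frac{3r+2}{8r}}(\log p)^{\frac{r-1}{r-2}}$. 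One then checks numerically that for $p_0 \ge 10^{10}$ and $3 \le r \le 10$ this crossover lies below $2p^{\frac{1}{2}+\frac{1}{4r}}$, so the two regimes overlap and the bound holds for all $N \ge 1$. This is exactly the computation carried out in the paper's proof of Theorem \ref{target}, culminating in inequality \eqref{factcheck}.

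Two consequences of the missing idea. First, your explanation for excluding $r=2$ is wrong: $r=2$ is excluded because the exponent comparison between $N^{1/2}$ and $N^{1-\frac{1}{r}}$ degenerates when $1-\frac{1}{r}=\frac{1}{2}$, so the large-$N$ regime cannot be absorbed; it has nothing to do with which of Theorems \ref{1overr} and \ref{1over2r} yields the smaller constant at $r=2$. Second, the hypothesis $p_0 \ge 10^{10}$ is not there to make a monotone optimization of $C(r)$ converge --- it is the numerical threshold needed for the crossover inequality to hold with the specific values of $c(2)$ and $C(r)$; your proposal assigns no role to this hypothesis at all.
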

The constants $c(r)$ and $C(r)$ as provided by \cite{trevino2015} are reproduced in Tables \ref{trevinoconstantsr} and \ref{trevinoconstants2r}.

\begin{table}[ht]
\centering
\caption{Values for the constants $c(r)$ provided by Trevi\~no.}
\label{trevinoconstantsr}
\begin{tabular}{rrrr}
\hline
\rowcolor[HTML]{EFEFEF} 
$r$  & $p_0 = 10^{7}$ & $p_0 = 10^{10}$ & $p_0 = 10^{20}$ \\ \hline
2  & 2.7381 & 2.5173 & 2.3549 \\
3  & 2.0197 & 1.7385 & 1.3695 \\
4  & 1.7308 & 1.5151 & 1.3104 \\
5  & 1.6107 & 1.4572 & 1.2987 \\
6  & 1.5482 & 1.4274 & 1.2901 \\
7  & 1.5052 & 1.4042 & 1.2813 \\
8  & 1.4703 & 1.3846 & 1.2729 \\
9  & 1.4411 & 1.3662 & 1.2641 \\
10 & 1.4160 & 1.3495 & 1.2562 \\ \hline
\end{tabular}
\end{table}

\begin{table}[ht]
\centering
\caption{Values for the constants $C(r)$ provided by Trevi\~no.}
\label{trevinoconstants2r}
\begin{tabular}{rrrr}
\hline
\rowcolor[HTML]{EFEFEF} 
$r$  & $p_0 = 10^{10}$ & $p_0 = 10^{15}$ & $p_0 = 10^{20}$ \\ \hline
2  & 3.6529          & 3.5851          & 3.5751          \\
3  & 2.5888          & 2.5144          & 2.4945          \\
4  & 2.1914          & 2.1258          & 2.1078          \\
5  & 1.9841          & 1.9231          & 1.9043          \\
6  & 1.8508          & 1.7959          & 1.7757          \\
7  & 1.7586          & 1.7066          & 1.6854          \\
8  & 1.6869          & 1.6384          & 1.6187          \\
9  & 1.6283          & 1.5857          & 1.5654          \\
10 & 1.5794          & 1.5410           & 1.5216          \\ \hline
\end{tabular}
\end{table}

The main aim of this paper is to obtain as many improvements to the size of the constants in Corollary \ref{targetresult} as possible. That is, we wish to prove the following.

\begin{theorem}\label{target}
For $r =2$, Theorem \ref{1over2r} holds with the constants provided in Table \ref{newconstants}, and the condition that $1 \leq N < 2p^\frac{5}{8}$.
For  $3 \leq r \leq 6$ and $p \geq 10^8$, or $7 \leq r \leq 10$ and $p \geq 10^9$ holds with the constants provided in Table \ref{newconstants} and no restriction on $N$. 
\end{theorem}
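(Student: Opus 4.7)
The plan is to follow Trevi\~no's argument in \cite{trevino2015} line by line, sharpening each estimate where possible and re-optimising the free parameters against the lower thresholds $p_0 = 10^8$ and $p_0 = 10^9$ used in Theorem \ref{target}. Recall the scheme: one writes
\[
S_\chi(M,N) = \frac{1}{AB}\sum_{x\imod p}\sum_{1\le a\le A}\sum_{1\le b\le B}\chi(x+ab) + E(A,B),
\]
where $E(A,B)$ accounts for the at most $AB$ shifts $x = M+n-ab$ that miss the interval, together with coincidences where $x+ab \equiv 0\imod p$. After applying H\"older with parameter $2r$ and the change of variable $y = a^{-1}x\imod p$, one arrives at
\[
|S_\chi(M,N)| \le \frac{N^{1-1/r}}{(AB)^{1/r}}\Bigl(\,\sum_{y\imod p}\Bigl|\sum_{1\le b\le B}\chi(y+b)\Bigr|^{2r}\Bigr)^{1/(2r)} + |E(A,B)|,
\]
which is then bounded using Theorem \ref{trevinoweil}. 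Choosing $A$ and $B$ essentially as $A \asymp N^{1/(2r)} p^{-(r+1)/(4r^2)}\cdot(\text{something})$ and $B \asymp p^{1/2+1/(4r)}/N$ gives the shape of Theorems \ref{1overr} and \ref{1over2r}.

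My first task is to write out the constants inside each of the three contributions separately (the main Weil term, the secondary Weil term $(2r-1)B^{2r}\sqrt{p}$, and the error $|E(A,B)|$), and to express the final upper bound as an explicit function $F(A,B;p,N,r)$ of the parameters. I would then substitute $A = \lceil A^*\rceil$ and $B = \lceil B^*\rceil$ for the continuous optima $A^*, B^*$, bound the ceiling penalties using the assumed lower bound $p \ge p_0$, and numerically minimise what remains. This is the step where constants improve: Trevi\~no's values in Tables \ref{trevinoconstantsr} and \ref{trevinoconstants2r} start from $p_0 = 10^7$ (respectively $10^{10}$), so re-optimising at $p_0 = 10^8, 10^9$ trims the lower-order contributions, and using Corollary \ref{pvlargeq} in place of Theorem \ref{explicitpv} wherever a P\'olya--Vinogradov estimate is invoked (e.g.\ in handling the error term $E(A,B)$ for the $r=2$ improvement) shrinks the constant in front of $\sqrt{p}$ from $1$ to strictly below $1$.

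For $r = 2$, the restriction $N < 2p^{5/8}$ is simply the Theorem \ref{1over2r} hypothesis $N \le 2p^{1/2+1/(4r)}$ specialised to $r=2$, so no additional argument is needed beyond the improved constant. For $3 \le r \le 10$, the stated result has no restriction on $N$: this is handled exactly as in the derivation of Corollary \ref{targetresult}, namely by splitting on whether $N \le 2p^{1/2+1/(4r)}$ or not. In the former range I apply the re-optimised Theorem \ref{1over2r}; in the latter I apply the re-optimised $r = 2$ case of Theorem \ref{1overr} and verify that, for the prescribed $p_0$, the resulting bound is absorbed into $C(r) N^{1-1/r} p^{(r+1)/(4r^2)}(\log p)^{1/(2r)}$. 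This verification is a monotonicity check in $N$ for $N \ge 2p^{1/2+1/(4r)}$.

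The main obstacle will be pure bookkeeping: tracking how each constant in the chain (the $(2r-1)!!$, the $(2r-1)$, the $\alpha_1, \alpha_2$ of Corollary \ref{pvlargeq}, and the ceiling-induced slack in $A, B$) propagates into the final bound, and then numerically minimising $F$ at each $(r, p_0)$ to confirm the values listed in Table \ref{newconstants}. A delicate point is the $r=2$, $p_0 = 10^8$ range, since here the lower-order $(2r-1)B^{2r}\sqrt{p}$ term from Theorem \ref{trevinoweil} is not negligible; I will absorb it by demanding $B$ not too large and then checking that the consequent constraint on $N$ is compatible with $N < 2p^{5/8}$. Once the inequality $F(A^*, B^*; p, N, r) < c(r) \cdot N^{1-1/r}p^{(r+1)/(4r^2)}(\log p)^{1/(2r)}$ is verified for each entry of Table \ref{newconstants}, the theorem follows.
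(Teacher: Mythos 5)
Your overall plan is close to the paper's, and you correctly identify the structure of Theorem \ref{target} as a ``piece together'' statement: the $r=2$ case is just Theorem \ref{1over2r} with its native hypothesis $N < 2p^{1/2+1/(4r)}$ specialised, and for $3\le r\le 10$ the restriction on $N$ is removed by switching to the sharpened $r=2$ bound of Theorem \ref{1overr} for $N \ge 2p^{1/2+1/(4r)}$ and checking a threshold condition; your ``monotonicity check in $N$'' is an equivalent formulation of the paper's inequality \eqref{factcheck}. You also correctly spot one of the two improvements, the use of Corollary \ref{pvlargeq} in place of the cruder $\sqrt{q}\log q$. However, that improvement is not used ``in handling the error term $E(A,B)$'' --- the error term is bounded purely by the inductive hypothesis. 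Rather, the sharper P\'olya--Vinogradov bound is used to shrink the \emph{range of $N$} over which the Burgess induction must actually run (the upper endpoint in the analogue of \eqref{nbounds2}), which in turn permits smaller parameter choices.

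The genuine gap is that you omit the second, and more consequential, improvement: the refined bound on $V_2 = \sum_x v(x)^2$. Your sketch of the H\"older step only keeps $V_1$ and the Weil sum $W$, but the correct chain is $V \le V_1^{1-1/r}V_2^{1/(2r)}W^{1/(2r)}$, and the $V_2^{1/(2r)}$ factor is where the $(\log p)^{1/(2r)}$ arises. Trevi\~no's Lemma \ref{v2trev} requires $A\ge 28$ and gives $V_2 \le 2AN(\tfrac{AN}{p}+\log 1.85A)$; the paper replaces this with Lemmas \ref{v2bound} and \ref{cutv2bound}, which only need $A>1$ (resp.\ $A>2$ with $2AN<p$) and carry the lower-order terms explicitly, with a leading coefficient $\tfrac{6}{\pi^2}$ on $\log A$ instead of $1$. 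Without this, your re-optimisation at $p_0 = 10^8, 10^9$ (let alone $10^5$) cannot produce the constants in Table \ref{newconstants}: for small $p_0$ the admissible $A$ drops below $28$, so Trevi\~no's lemma is simply not applicable, and even where it is applicable the coefficient on $\log A$ is too large. You should therefore add, as a preliminary step, the derivation of the sharpened $V_2$ estimate (using the explicit Euler--Maclaurin bounds for $\sum 1/n$ and $\sum (\log n)/n^2$), and only then carry out the parameter optimisation and the Theorem \ref{target} bootstrap. You would also benefit from noting the paper's secondary bootstrapping device: for $p_0 \le 10^9$ one can replace $C(2;p_0)$ by $c(2;p_0)(\log 10p_0)^{1/4}$ before stepping from $r=2$ to $r=3$, which is what lets Table \ref{newconstants} start at $p_0=10^5$ for $r=2$.
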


\begin{table}[ht]
\centering
\caption{Values for the constants $C(r)$.}
\label{newconstants}
\begin{tabular}{rrrrrrr}
\hline
\rowcolor[HTML]{EFEFEF} 
$r$ & $p_0 = 10^{5}$  & $p_0 = 10^{6}$  & $p_0 = 10^{7}$  & $p_0 = 10^{8}$  & $p_0 = 10^{9}$  & $p_0 = 10^{10}$ \\ \hline
2   & 3.7125          & 3.4682          & 3.3067          & 3.1980          & 3.1259          & 3.0679          \\
3   & 2.7979          & 2.6371          & 2.5131          & 2.4318          & 2.3776          & 2.3358          \\
4   & 2.4157          & 2.2980          & 2.2022          & 2.1513          & 2.0994          & 2.0613          \\
5   & 2.1801          & 2.0981          & 2.0273          & 1.9755          & 1.9419          & 1.9084          \\
6   & 2.0874          & 2.0037          & 1.9424          & 1.8962          & 1.8353          & 1.8054          \\
7   & 1.8948          & 1.8454          & 1.8087          & 1.7820          & 1.7561          & 1.7291          \\
8   & 1.7993          & 1.7609          & 1.7306          & 1.7093          & 1.6894          & 1.6696          \\
9   & 1.7266          & 1.6963          & 1.6692          & 1.6492          & 1.6323          & 1.6186          \\
10  & 1.6720          & 1.6411          & 1.6175          & 1.5991          & 1.5845          & 1.5727          \\ \hline
    &                 &                 &                 &                 &                 &                 \\ \hline
\rowcolor[HTML]{EFEFEF} 
$r$ & $p_0 = 10^{11}$ & $p_0 = 10^{12}$ & $p_0 = 10^{13}$ & $p_0 = 10^{14}$ & $p_0 = 10^{15}$ & $p_0 = 10^{16}$ \\ \hline
2   & 3.0280          & 2.9997          & 2.9790          & 2.9635          & 2.9515          & 2.9421          \\
3   & 2.3025          & 2.2782          & 2.2600          & 2.2461          & 2.2351          & 2.2263          \\
4   & 2.0329          & 2.0117          & 1.9956          & 1.9831          & 1.9733          & 1.9654          \\
5   & 1.8831          & 1.8638          & 1.8487          & 1.8367          & 1.8272          & 1.8194          \\
6   & 1.7825          & 1.7646          & 1.7503          & 1.7388          & 1.7294          & 1.7216          \\
7   & 1.7081          & 1.6914          & 1.6779          & 1.6669          & 1.6577          & 1.6500          \\
8   & 1.6501          & 1.6345          & 1.6219          & 1.6112          & 1.6023          & 1.5946          \\
9   & 1.6029          & 1.5882          & 1.5762          & 1.5661          & 1.5575          & 1.5501          \\
10  & 1.5629          & 1.5499          & 1.5384          & 1.5287          & 1.5205          & 1.5134          \\ \hline
    &                 &                 &                 &                 &                 &                 \\ \hline
\rowcolor[HTML]{EFEFEF} 
$r$ & $p_0 = 10^{17}$ & $p_0 = 10^{18}$ & $p_0 = 10^{19}$ & $p_0 = 10^{20}$ & $p_0 = 10^{50}$ & $p_0 = 10^{75}$ \\ \hline
2   & 2.9345          & 2.9282          & 2.9230          & 2.9185          & 2.8752          & 2.8658          \\
3   & 2.2190          & 2.2128          & 2.2076          & 2.2029          & 2.1503          & 2.1368          \\
4   & 1.9590          & 1.9537          & 1.9493          & 1.9455          & 1.9094          & 1.9011          \\
5   & 1.8130          & 1.8077          & 1.8033          & 1.7996          & 1.7689          & 1.7630          \\
6   & 1.7151          & 1.7097          & 1.7051          & 1.7012          & 1.6715          & 1.6668          \\
7   & 1.6435          & 1.6380          & 1.6333          & 1.6292          & 1.5986          & 1.5947          \\
8   & 1.5883          & 1.5828          & 1.5779          & 1.5738          & 1.5986          & 1.5382          \\
9   & 1.5439          & 1.5384          & 1.5336          & 1.5294          & 1.4959          & 1.4925          \\
10  & 1.5072          & 1.5019          & 1.4972          & 1.4930          & 1.4581          & 1.4548          \\ \hline
\end{tabular}
\end{table}

In this and many other regards, the author is indebted to Trevi\~no, since the method of proof will be essentially the same as his. There are two primary ways one could modify the arguments of \cite{trevino2015} to obtain better constants. For one, Burgess' bound is automatic when $N$ is large enough, since in such a scenario the P\'olya--Vinogradov inequality is stronger. However, in \cite{trevino2015}, the simple estimate
\[\lvert S_\chi(M,N) \rvert \leq \sqrt{q}\log{q},\]
is used. Here, we will use Theorem \ref{pvlargeq} instead. This has the effect of reducing the range of $N$ for which we need to establish Burgess' bound, which in turn allows us to admit smaller constants in said bound. This alone yields a significant gains over the constants in \cite{trevino2015}.

The second technique we employ involves the following counting lemma. 
\begin{lemma}{\textnormal{\cite[Lemma 2.1]{trevino2015}}}\label{v2trev}
Let $p$ be a prime and $A\in \left[28,\tfrac{N}{12}\right)$ and $N<p$ be positive integers. Then, we have
\[V_2 := \sum_{x \imod{p}} v^2(x) \leq 2AN\left(\frac{AN}{p} + \log{1.85A}\right),\]
where
\[v(x) = \left|\left\{(a,n) \in \N\mbox{ }|\mbox{ }1\leq a \leq A\mbox{, }M < n \leq M+N\mbox{ and }n \equiv ax \imod{p}\right\} \right|.\]
\end{lemma}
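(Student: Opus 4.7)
The plan is to reinterpret $V_2$ as a counting quantity and then split into diagonal and off-diagonal contributions. Since every $a \in [1,A]$ is coprime to $p$, each congruence $n \equiv ax \pmod{p}$ uniquely determines $x$ modulo $p$. Expanding $v(x)^2$ and interchanging the order of summation gives
\[
V_2 = \left| \{ (a_1, n_1, a_2, n_2) : a_i \in [1,A],\ n_i \in (M, M+N],\ a_2 n_1 \equiv a_1 n_2 \pmod{p} \} \right|.
\]
Let $D$ denote the subset where $a_2 n_1 = a_1 n_2$ as integers, and let $O$ be its complement.

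For the diagonal $D$, I would parametrise $a_1 = d\alpha$, $a_2 = d\beta$ with $\gcd(\alpha,\beta) = 1$. The equality then forces $n_1 = m\alpha$ and $n_2 = m\beta$ for a single positive integer $m$ constrained to an interval of length at most $N/\max(\alpha,\beta)$. Noting that there is a unique coprime pair with $\max = 1$ and exactly $2\phi(k)$ coprime pairs with $\max = k \geq 2$, summing over admissible $d$ and $m$ reduces the problem to
\[
|D| \leq AN + 2AN\sum_{k=2}^{A}\frac{\phi(k)}{k^2} + 2A\sum_{k=2}^{A}\frac{\phi(k)}{k}.
\]
An explicit Mertens-type upper bound on $\sum_{k \leq A}\phi(k)/k^2$, valid once $A \geq 28$, produces a bound of the form $|D| \leq AN\log(1.85A) + (\text{lower order})$, while the third sum contributes only $O(A^2)$.

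For the off-diagonal $O$, fix $(a_1, a_2)$ and observe that $a_2 n_1 - a_1 n_2$ lies in an interval of length $(a_1 + a_2)(N-1)$. The congruence forces $p$ to divide this difference; since $d := \gcd(a_1, a_2)$ also divides it and $\gcd(d, p) = 1$, the admissible nonzero values are multiples of $dp$, giving at most $(a_1+a_2)(N-1)/(dp) + O(1)$ possibilities. For each such value, setting $\alpha_i = a_i/d$ with $\gcd(\alpha_1, \alpha_2) = 1$, the solutions $(n_1, n_2)$ lie on an arithmetic progression of step $(\alpha_1, \alpha_2)$, contributing at most $dN/\max(a_1, a_2) + 1$ pairs. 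Multiplying and summing over $(a_1, a_2) \in [1,A]^2$, and using $(a_1 + a_2)/\max(a_1,a_2) \leq 2$ to bound the leading term, yields $|O| \leq 2A^2 N^2/p + (\text{lower order})$.

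Adding the two bounds gives the claim. The main obstacle is keeping the numerous lower-order errors under control: these arise as $+1$-type contributions from counting lattice points in real intervals and scale like $A^2$, $A^2 N/p$, and $AN$, and must be strictly dominated by the main terms $AN\log A$ and $A^2 N^2/p$. The hypotheses $A \geq 28$, $A < N/12$, and $N < p$ are precisely what force this domination, and the specific constant $1.85$ must be calibrated carefully through the explicit Mertens-type estimate so that the combined main-plus-lower-order bound fits inside $2AN\log(1.85A) + 2A^2 N^2/p$ across the entire admissible range.
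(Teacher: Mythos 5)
The paper does not actually prove this lemma; it cites it from Trevi\~no and, in the proof of its own Lemma \ref{v2bound}, reproduces only the skeleton: formula (2.16), namely
\[
V_2 \leq AN + \tfrac{2N^2}{p}S_1 + \tfrac{2N}{p}S_2 + 2NS_3 + A^2 - A,
\]
together with the definitions of $S_1, S_2, S_3$. Your top-level strategy (reinterpret $V_2$ as a quadruple count, parametrize by $\gcd$, invoke explicit $\sum\phi(k)/k^2$--type bounds) is the same, but your diagonal/off-diagonal split introduces a double count that the cited decomposition avoids, and this is not a harmless lower-order effect.

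Here is the gap. Trevi\~no's formula (2.16) arises by bounding, for each pair $a_1\neq a_2$, the \emph{total} number of $(n_1,n_2)$ with $a_2n_1\equiv a_1n_2\pmod p$ --- diagonal and off-diagonal together --- by the single product $\left(\tfrac{L}{dp}+1\right)\!\left(\tfrac{dN}{\max(a_1,a_2)}+1\right)$ with $L=(a_1+a_2)(N-1)$. The four cross-terms produce precisely $\tfrac{2N^2}{p}S_1$, $\tfrac{2N}{p}S_2$, $2NS_3$, and $A^2-A$; in particular the term $\sum_{a_1\neq a_2}\tfrac{dN}{\max(a_1,a_2)} = 2NS_3$ appears exactly once. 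In your proposal you bound $|D|$ separately, getting a main term $2AN\sum_{k\geq 2}\phi(k)/k^2\sim\tfrac{12}{\pi^2}AN\log A$, which \emph{is} the $2NS_3$ contribution. You then additionally allow a ``$+O(1)$'' in the count of admissible nonzero $c$ in $|O|$; multiplied against the $\tfrac{dN}{\max(a_1,a_2)}$ factor and summed, that $+O(1)$ generates a \emph{second} copy of $2NS_3\sim\tfrac{12}{\pi^2}AN\log A$. Your inventory of error sizes (``$A^2$, $A^2N/p$, $AN$'') misses this: the cross-term is $\Theta(AN\log A)$, not $O(AN)$. Adding the two copies puts a coefficient of roughly $\tfrac{24}{\pi^2}\approx 2.43$ on $AN\log A$, already exceeding the target coefficient of $2$; since $A\geq 28$, the excess $\approx 0.43\,AN\log A$ is not absorbed by the constant $2AN\log 1.85\approx 1.23\,AN$. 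To close the gap you must either bound diagonal and off-diagonal jointly per $(a_1,a_2)$ as in (2.16), or argue that whenever the equality line meets the box, $0$ lies in the interval of achievable values of $a_2n_1-a_1n_2$, so that the $+1$ in the $c$-count can be dropped; as written the decomposition overcounts.
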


In the next section, we relax the restrictions on $A$ to extract some additional terms in this estimate. This will allow us to compute $C(r)$ for smaller values of $p_0$. A bonus feature will be that the value of $A$ has more influence on the the size of the bound. However, in both our case and Trevi\~no's case, we note that this estimate seems to be a little less than twice as large as the actual value of $V_2$.

In several cases, determining improved constants $c(r)$ for the case of $r=2$ in Theorem \ref{1overr} allows us to improve upon the constants $C(r)$ across all $r$. The values we determine for $c(2)$ are provided in Table \ref{2constants}. Using these constants and the improvements mentioned above, we arrive at the constants $C(r)$ in Table \ref{newconstants}. 

Once established, these constants can be applied to various number-theoretic problems. In particular, we establish the following improvement to an application, which is essentially \cite[Theorem 1.10]{trevino2015}.

\begin{theorem}\label{kthpowers}
Let $p$ be a prime number and $k > 1$ be a positive divisor of $p-1$. Let $n_{p,k}$ be the least $k$th power non-residue modulo $p$. Fix $\alpha > \frac{1}{4\sqrt{e}}$. Then there is a computable $Y$ (depending only on $\alpha$) for which $n_{p,k} < p^\alpha$ whenever $p \geq p^Y$.
\end{theorem}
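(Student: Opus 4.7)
The strategy is Burgess's classical deduction for small non-residues, now powered by the improved constants $C(r)$ of Theorem~\ref{target}. Assume for contradiction that $n_{p,k} \geq p^\alpha$ and fix a Dirichlet character $\chi$ modulo $p$ of order exactly $k$. Since $(\mathbb{Z}/p\mathbb{Z})^\times$ is cyclic, $\chi(n) = 1$ precisely when $n$ is a nonzero $k$th power residue; in particular $\chi(n) = 1$ whenever every prime factor of $n$ is at most $p^\alpha$. Setting $y := p^\alpha$, this forces
\[
\lvert S_\chi(0,N) \rvert \;\geq\; 2\Psi(N,y) - N
\]
for every positive integer $N$, where $\Psi(N,y)$ counts the $y$-smooth positive integers up to $N$.

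Now pick a parameter $u \in (1, \sqrt{e})$ and take $N := \lfloor p^{\alpha u} \rfloor$. An explicit lower bound of Hildebrand type (or a Burgess-style elementary count of products of at most $\lfloor u\rfloor + 1$ primes below $y$) gives $\Psi(N,y) \geq (\rho(u) - \varepsilon)N$ for any prescribed $\varepsilon > 0$, valid for $p$ beyond a computable threshold. On $[1,2]$, Dickman's function is $\rho(u) = 1 - \log u$, so the choice $u < \sqrt{e}$ yields $2\rho(u) - 1 > 0$ and therefore
\[
\lvert S_\chi(0,N) \rvert \;\geq\; (2\rho(u) - 1 - 2\varepsilon)\, N.
\]
On the other hand, Corollary~\ref{targetresult} together with Theorem~\ref{target} gives
\[
\lvert S_\chi(0,N) \rvert \;\leq\; C(r)\, N^{1-1/r}\, p^{(r+1)/(4r^2)} (\log p)^{1/(2r)}.
\]

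Dividing by $N^{1-1/r}$, raising to the $r$-th power, and inserting $N = \lfloor p^{\alpha u}\rfloor$ produces
\[
p^{\alpha u} \;\leq\; \left(\frac{C(r)}{2\rho(u) - 1 - 2\varepsilon}\right)^{\!r} p^{(r+1)/(4r)} (\log p)^{1/2}.
\]
Whenever $\alpha u > (r+1)/(4r)$, the left-hand side dominates for all $p$ above an explicit threshold $Y = Y(\alpha, r, u)$, producing the desired contradiction. Because $(r+1)/(4r\sqrt{e}) \to 1/(4\sqrt{e})$ as $r \to \infty$, any fixed $\alpha > 1/(4\sqrt{e})$ admits a choice of $r$ (large) and $u \in (1, \sqrt{e})$ (sufficiently close to $\sqrt{e}$) for which $\alpha u > (r+1)/(4r)$; optimizing $(r,u)$ over these admissible choices collapses the dependence of $Y$ to $\alpha$ alone.

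The main obstacle is producing an explicit, computable lower bound for $\Psi(N,y)$: to make $Y$ effective, $\varepsilon$ must be tracked as a decreasing function of $p$, which either demands an explicit Hildebrand-type estimate or, following Burgess, a careful elementary count of short products of primes below $y$. A secondary issue is that Theorem~\ref{target} only tabulates $C(r)$ up to $r=10$, which is sufficient for $\alpha$ bounded comfortably above $11/(40\sqrt{e})$, but forces one to extend the tables (or fall back on a non-explicit form of Burgess' bound) when $\alpha$ approaches $1/(4\sqrt{e})$, at the cost of an inflated but still computable $Y$.
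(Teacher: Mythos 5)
Your strategy is the same Vinogradov trick the paper uses, and in fact your leading constant is the paper's in disguise: with $u = 1/(1/\sqrt{e}+\delta)$ one has $\rho(u) = 1 - \log u = \tfrac{1}{2} + \log(1+\delta\sqrt{e})$, so $2\rho(u)-1 = 2\log(\delta\sqrt{e}+1)$, which is precisely the leading term in Lemma~\ref{vinotrick}. You are effectively re-deriving that lemma rather than taking a genuinely different route.

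The substantive issue is the one you flag yourself: you assert, without proof, an explicit lower bound $\Psi(N,y)\geq (\rho(u)-\varepsilon)N$ valid past a \emph{computable} threshold with $\varepsilon$ controllable as a function of $p$. That assertion carries all the quantitative weight of the theorem — $Y$ is computable only if the $\Psi$ estimate is fully explicit — yet it is left as a black box. The paper closes exactly this gap by importing Trevi\~no's Lemma~5.3 (reproduced as Lemma~\ref{vinotrick}), whose error terms $-1/\log^2 x - 1/\log^2 y - 1/x$ play the role of your $-2\varepsilon$; this is the non-trivial ingredient your sketch is missing. Two minor points are handled correctly on your end: the requirement $\alpha u > (r+1)/(4r)$ matches the paper's choice $x=p^{1/4+1/(2r)}$ (i.e.\ $\alpha u = 1/4+1/(2r)$), and your observation that the $C(r)$ tables must be extended beyond $r=10$ as $\alpha\to 1/(4\sqrt{e})$ is exactly why the paper computes Table~\ref{longconstants} up to $r=25$ (optimal at $r=23$ for $\alpha=1/6$).
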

In particular, Table \ref{kpossible} lists several such pairs of $\alpha$ and $Y$. Note that \cite{trevino2015} established the pair $(\sfrac{1}{6}, 4732)$.

\begin{table}[ht]
\centering
\caption{Pairs $\alpha$, $Y$ obtained in Theorem \ref{kthpowers}.}
\label{kpossible}
\begin{tabular}{cc}
\hline
\rowcolor[HTML]{EFEFEF} 
$\alpha$  & $Y$ \\ \hline
$\sfrac{1}{4}$ & 83 \\
$\sfrac{1}{5}$ & 334 \\
$\sfrac{1}{6}$ & 3872 \\ \hline
\end{tabular}
\end{table}

We also consider an application of Lezowski and McGown (\cite{lezowski2017}), which uses Burgess' bound to bound the conductors of norm-Euclidean cyclic number fields with prime degree $l$, $3 \leq l \leq 100$. Using the improved constants we obtain, we may establish the following modest improvement to \cite[Proposition 2.4]{lezowski2017}. 

\begin{proposition}\label{conductorbounds}
Table \ref{NEtable} provides unconditional bounds on the conductor $f$ of norm-Euclidean cyclic number fields of odd prime degree $3< l < 100$.
\end{proposition}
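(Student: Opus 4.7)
The plan is to follow the argument of \cite[Proposition 2.4]{lezowski2017} essentially verbatim, substituting the improved Burgess constants $C(r)$ from Table \ref{newconstants} wherever Lezowski and McGown invoked the constants from Table \ref{trevinoconstants2r}. Since the derivation of conductor bounds in \cite{lezowski2017} depends on the explicit Burgess bound (Corollary \ref{targetresult}) only through the numerical values of $C(r)$, any improvement in those constants translates directly into an improvement in the conductor bounds.

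First, I would recall the setup from \cite{lezowski2017}: for each odd prime degree $l$ with $3 < l < 100$, the existence of a norm-Euclidean cyclic number field of degree $l$ and conductor $f$ forces certain short character sums $S_\chi(M,N)$ (for suitable non-principal characters $\chi$ of conductor $f$ and appropriately chosen $M,N$) to be abnormally large. This lower bound, combined with Corollary \ref{targetresult} as an upper bound, yields a contradiction once $f$ exceeds a computable threshold depending on $C(r)$, $r$, and the cutoff $p_0$.

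Next, for each prime $l$ in the range, I would re-run the optimisation in \cite{lezowski2017} with the new constants $C(r)$ in place. For each $l$ one must choose both the exponent $r \in \{2,\ldots,10\}$ and the cutoff $p_0$ so as to minimise the resulting bound on $f$; since the optimal pair $(r,p_0)$ may shift relative to the choices made in \cite{lezowski2017}, I would perform the optimisation from scratch rather than simply substituting constants into their final inequality. The improved bound on $f$ obtained for each $l$ would then be recorded in Table \ref{NEtable}.

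The main obstacle is purely computational: ensuring the numerical optimisation is performed correctly and consistently across all primes $3 < l < 100$, and verifying that each resulting bound is a genuine (if modest) improvement over the corresponding entry in \cite{lezowski2017}. No new analytic content is required beyond the improved Burgess constants supplied by Theorem \ref{target}.
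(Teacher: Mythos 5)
Your proposal captures the essence of what the paper does: rerun the Lezowski--McGown machinery with the improved Burgess constants from Theorem~\ref{target}. Two small deviations from the paper's actual argument are worth flagging. First, the paper does not draw its constants from Table~\ref{newconstants} directly but computes fresh values $C(3;10^{40}) = 2.159\ldots$ and $C(4;10^{40}) = 1.914\ldots$ specifically at $p_0 = 10^{40}$ --- a choice forced by the circular dependence between the cutoff $p_0$ and the resulting conductor bound (one must land on a $p_0$ no larger than any bound it produces, and the tabulated cutoffs $10^{20}$, $10^{50}$, $10^{75}$ are either too small to be sharp or too large to be valid for the conductors around $10^{49}$--$10^{82}$ that emerge). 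Your plan to re-optimise over $p_0$ should surface this constraint, but you should make it explicit. Second, the paper does not in fact re-optimise $r$ from scratch: it keeps the Lezowski--McGown choices ($r=4$ for $l \in \{3,5,7\}$, $r=3$ otherwise) and simply substitutes the improved constants into their inequality (8.1), namely $f \geq 2.7\, D_2(r)^r (l-1)^r f^{(3r+1)/(4r)} (\log f)^{5/2}$ for $l > 3$, with a variant for $l = 3$. Your re-optimisation could only do at least as well, so this is not a gap, merely extra work the paper avoids; either way the improvement is, as you say, purely numerical.
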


\begin{table}[ht]
\centering
\caption{Unconditional bounds on the conductor $f$ of norm-Euclidean cyclic number fields of odd prime degree $3 \leq l < 100$. These bounds improve upon \cite[Proposition 2.4]{lezowski2017} by no more than a factor of 100.}
\label{NEtable}
\begin{tabular}{rccccc}
$l =$    & \cellcolor[HTML]{EFEFEF}3  & \cellcolor[HTML]{EFEFEF}5  & \cellcolor[HTML]{EFEFEF}7  & \cellcolor[HTML]{EFEFEF}11 & \cellcolor[HTML]{EFEFEF}13 \\ \cline{2-6} 
$f < $ & $2.0\cdot 10^{49}$         & $5.1\cdot 10^{53}$         & $7.9\cdot 10^{57}$         & $7.0\cdot 10^{62}$         & $2.7\cdot 10^{64}$         \\ \cline{2-6} 
       &                            &                            &                            &                            &                            \\
$l =$    & \cellcolor[HTML]{EFEFEF}17 & \cellcolor[HTML]{EFEFEF}19 & \cellcolor[HTML]{EFEFEF}23 & \cellcolor[HTML]{EFEFEF}29 & \cellcolor[HTML]{EFEFEF}31 \\ \cline{2-6} 
$f <$  & $8.5\cdot 10^{66}$         & $8.9\cdot 10^{67}$         & $4.8\cdot 10^{69}$         & $5.7\cdot 10^{71}$         & $2.3\cdot 10^{72}$         \\ \cline{2-6} 
       &                            &                            &                            &                            &                            \\
$l =$    & \cellcolor[HTML]{EFEFEF}37 & \cellcolor[HTML]{EFEFEF}41 & \cellcolor[HTML]{EFEFEF}43 & \cellcolor[HTML]{EFEFEF}47 & \cellcolor[HTML]{EFEFEF}53 \\ \cline{2-6} 
$f<$   & $8.2\cdot 10^{73}$         & $6.6\cdot 10^{74}$         & $1.8\cdot 10^{75}$         & $1.1\cdot 10^{76}$         & $1.2\cdot 10^{77}$         \\ \cline{2-6} 
       &                            &                            &                            &                            &                            \\
$l =$    & \cellcolor[HTML]{EFEFEF}59 & \cellcolor[HTML]{EFEFEF}61 & \cellcolor[HTML]{EFEFEF}67 & \cellcolor[HTML]{EFEFEF}71 & \cellcolor[HTML]{EFEFEF}73 \\ \cline{2-6} 
$f<$   & $9.8\cdot 10^{77}$         & $2.0\cdot 10^{78}$         & $1.3\cdot 10^{79}$         & $4.0\cdot 10^{79}$         & $6.8\cdot 10^{79}$         \\ \cline{2-6} 
       &                            &                            &                            &                            &                            \\
$l =$    & \cellcolor[HTML]{EFEFEF}79 & \cellcolor[HTML]{EFEFEF}83 & \cellcolor[HTML]{EFEFEF}89 & \cellcolor[HTML]{EFEFEF}97 &                            \\ \cline{2-5}
$f<$   & $3.3\cdot 10^{80}$         & $8.7\cdot 10^{80}$         & $3.5\cdot 10^{81}$         & $1.9\cdot 10^{82}$         &                            \\ \cline{2-5}
\end{tabular}
\end{table}

\begin{remark}
For the majority of this paper, it suffices to write $C(r)$ or $c(r)$ to represent the constants in Theorems \ref{1overr} and \ref{1over2r}. However, when it is necessary to highlight the dependence on $p_0$, we may also write $C(r;p_0)$ or $c(r;p_0)$. 
\end{remark}

\section{Tighter Estimates for $V_2$}

In estimating $V_2$, we will make use of the following estimates for summatory functions.

\begin{lemma}\label{sum1onn}
For $x \geq 1$, 

\[\sum_{n \leq x}\frac{1}{n} < \log{x} + \gamma + \frac{1}{2x} - \frac{1}{12x^2} + \frac{1}{64x^4}\]
\end{lemma}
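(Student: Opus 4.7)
The plan is to reduce to integer $x$ by monotonicity, and then apply the Euler--Maclaurin summation formula to $f(t)=1/t$ with its alternating-series error estimate for completely monotone functions.

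First I would show that the right-hand side
$R(x) := \log x + \gamma + \frac{1}{2x} - \frac{1}{12 x^2} + \frac{1}{64 x^4}$
is strictly increasing on $[1,\infty)$: its derivative is
\[
R'(x) = \frac{1}{x} - \frac{1}{2x^2} + \frac{1}{6x^3} - \frac{1}{16 x^5},
\]
and grouping the first two terms as $(2x-1)/(2x^2) \geq 1/2$ for $x \geq 1$ more than dominates the negative $1/(16 x^5)$ piece, so $R'(x) > 0$ on $[1,\infty)$. Since $\sum_{n \leq x}\frac{1}{n} = H_N$ is constant on each half-open interval $[N, N+1)$, it suffices to prove $H_N < R(N)$ at every positive integer $N$.

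Next I would apply Euler--Maclaurin to $f(t) = 1/t$ on $[1,N]$, retaining terms through the $B_4$ contribution, to obtain
\[
H_N = \log N + \gamma + \frac{1}{2N} - \frac{1}{12 N^2} + \frac{1}{120 N^4} - \rho(N),
\]
where $\rho(N)$ is the tail remainder. Because $1/t$ is completely monotone on $(0,\infty)$, its Euler--Maclaurin asymptotic expansion has the alternating-series property that truncating at a positive term yields an upper bound for $H_N$; in particular $\rho(N) > 0$ (and in fact $\rho(N) < 1/(252 N^6)$). Hence $H_N < \log N + \gamma + \frac{1}{2N} - \frac{1}{12 N^2} + \frac{1}{120 N^4}$, and since $\frac{1}{120} < \frac{1}{64}$ this upgrades to $H_N < R(N)$, finishing the proof.

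The main step to justify carefully is the sign of $\rho(N)$---namely, that the partial sum through the $1/(120 N^4)$ term is a genuine upper bound for every $N \geq 1$, not merely an asymptotic one. This can be handled either by writing the remainder in its explicit Bernoulli-polynomial integral form and analysing its sign on each unit subinterval using the complete monotonicity of $1/t$, or by invoking a classical theorem on Euler--Maclaurin expansions of completely monotone functions (for instance in Hardy's \emph{Divergent Series} or Whittaker--Watson).
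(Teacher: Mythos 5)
Your proof takes a genuinely different route from the paper's. The paper works with real $x>1$ directly: it writes the order-$4$ Euler--Maclaurin remainder $R_4(x)$ explicitly in terms of Bernoulli polynomials evaluated at $\{x\}$, isolates a manifestly nonpositive piece of the form $-\frac{\{x\}}{x}\cdot(\text{positive bracket})$, and then bounds the tail integral $\int_x^\infty B_4(\{t\})/t^5\,\mathrm{d}t$ by hand. You instead reduce to integer arguments by showing the right-hand side $R(x)$ is increasing, and then invoke the classical alternating-sign property of the Euler--Maclaurin expansion for completely monotone functions. Your route is shorter if one is willing to cite that classical theorem, and it naturally gives the sharper coefficient $1/(120N^4)$ at integers (from which $1/(64x^4)$ for all real $x\geq 1$ follows); the paper's route is self-contained and needs no reduction to integers, at the cost of tracking the Bernoulli-polynomial sign analysis explicitly.

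One local slip needs repair: the inequality $(2x-1)/(2x^2)\geq 1/2$ is false for $x>1$ (it equals $1/2$ at $x=1$ and then decreases to $0$). The monotonicity conclusion is nonetheless correct; for instance write
\[
R'(x)=\frac{1}{x}\left(1-\frac{1}{2x}+\frac{1}{6x^2}-\frac{1}{16x^4}\right),
\]
and note that for $x\geq 1$ the first pair of terms is at least $1/2$ while $\tfrac{1}{6x^2}-\tfrac{1}{16x^4}\geq 0$, giving $R'(x)\geq 1/(2x)>0$. With that fix your argument is sound, though you should also state precisely which form of the remainder theorem you are appealing to, and check the sign convention: the next coefficient, $-B_6/(6N^6)=-1/(252N^6)$, is negative, so truncating after the $B_4$ term yields an upper bound, which is indeed the direction you need.
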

\begin{proof}
The lemma can easily be verified for $x = 1$. Note that the approach here is substantially similar to \cite[Lemma 2.8]{dona2019}. For $x >1$, we know from Euler--Maclaurin summation \cite[Theorem B.5]{montgomery2007} that for any integer $K$
\begin{equation}\label{EMharm}
\sum_{1<n\leq x}\frac{1}{n} - \log{x} = \sum_{i=1}^{K}\left(\frac{B_i}{i} -\frac{B_i(\fp{x})}{ix^i}\right) -\int_1^x \frac{B_K(\fp{t})}{t^{K+1}} \, \textrm{d}t,
\end{equation}
where $B_i$ and $B_i(x)$ are the $i$th Bernoulli number and polynomial, respectively. We may rewrite equation \eqref{EMharm} to take advantage of the fact that we know $\lim_{x\to\infty}\left(\sum_{n\leq x} \frac{1}{n} - \log{x}\right) = \gamma$, the Euler--Mascheroni constant. That is, 
\[
\sum_{n\leq x}\frac{1}{n}  - \log{x} = 1-\sum_{i=1}^{K}\frac{B_i}{i} - \int_1^\infty \! \frac{B_K(\fp{t})}{t^{K+1}} \, \textrm{d}t + R_K(x) = \gamma + R_K(x),  
\]
where 
\[R_K(x) = -\sum_{i=1}^{K}\frac{B_i(\fp{x})}{ix^i} + \int_x^\infty \! \frac{B_K(\fp{t})}{t^{K+1}} \, \textrm{d}t = \bigO\left(\frac{1}{x}\right).\]

Let $K=4$. Then, after some rearranging,

\begin{equation*}
    \begin{aligned}
    R_4(x) &- \frac{1}{2x} + \frac{1}{12x^2} - \frac{1}{120x^4} = \\ &-\frac{\fp{x}}{x}\left(1  - \frac{1}{2x} + \frac{1}{6x^2} + \frac{\fp{x}}{2x} +\frac{\fp{x}^2}{3x^2} - \frac{\fp{x}}{2x^2} + \frac{\fp{x}}{4x^3} + \frac{\fp{x}^3}{4x^3} - \frac{\fp{x}^2}{2x^3}\right) \\ &+\int_x^\infty \! \frac{-\frac{1}{30} + \fp{t}^2 + \fp{t}^4 - 2\fp{t}^3}{t^5} \, \textrm{d}t. 
      \end{aligned}
\end{equation*}
The bracketed expression above is positive for $x>1$, while the integral is $\bigO(x^{-4})$, so with a tight enough estimate on the integral, we can improve our bound on $\sum_{n\leq x} n^{-1}$ with two exact lower order terms ($\tfrac{1}{2x} \text{and} -\tfrac{1}{12x^2}$) and one estimated lower order term ($\frac{1}{120x^4}$, with an adjustment no larger than $\frac{1}{120x^4}$ itself). In particular, we have that 
\[\int_x^\infty \! \frac{-\frac{1}{30} + \fp{t}^2 + \fp{t}^4 - 2\fp{t}^3}{t^5} \, \textrm{d}t < -\frac{1}{120x^4} + \int_x^\infty \! \frac{1}{16t^5} \, \textrm{d}t = - \frac{1}{120x^4} + \frac{1}{64x^4}.\]
Therefore, 
\[R_4(x) < \frac{1}{2x} -\frac{1}{12x^2} + \frac{1}{64x^4}.\]
Hence, we have the proposition.
\end{proof}

\begin{lemma}\label{lndond}
For $x \geq 1$, 
\[\sum_{n \leq x} \frac{\log{n}}{n^2} > -\zeta'(2) - \frac{\log{x}}{x} - \frac{1}{x} - \frac{\log(x)}{x^2}.\]
\end{lemma}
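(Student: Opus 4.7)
The plan is to recast the statement as a tail bound on the series $\sum_{n \ge 1}(\log n)/n^2$ and then apply a standard integral comparison. Since $\zeta'(s) = -\sum_{n \ge 1} (\log n)/n^s$ for $\mathrm{Re}(s) > 1$, the inequality in the lemma is equivalent to
\[
\sum_{n > x} \frac{\log n}{n^2} < \frac{\log x}{x} + \frac{1}{x} + \frac{\log x}{x^2}.
\]
Integration by parts (with $u = \log t$, $dv = t^{-2}\,dt$) gives
\[
\int_x^\infty \frac{\log t}{t^2}\,dt = \frac{\log x}{x} + \frac{1}{x},
\]
which already accounts for the first two terms on the right.

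For $x \ge 2$, I would invoke the classical integral comparison for a strictly decreasing positive function,
\[
\sum_{n \ge N} f(n) < f(N) + \int_N^\infty f(t)\,dt,
\]
applied with $f(t) = (\log t)/t^2$ and $N = \lfloor x \rfloor + 1$. A direct calculation gives $f'(t) = (1 - 2\log t)/t^3$, so $f$ is strictly decreasing on $[e^{1/2}, \infty)$, and in particular on $[N,\infty)$ since $N \ge 3$. Because $N > x$, strict monotonicity yields both $f(N) < f(x) = (\log x)/x^2$ and $\int_N^\infty f(t)\,dt < \int_x^\infty f(t)\,dt$; combining these with the integral comparison delivers the required bound.

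The remaining range $1 \le x < 2$ is handled directly. Here $\sum_{n \le x} (\log n)/n^2 = 0$, so the statement reduces to showing $(\log x)/x + 1/x + (\log x)/x^2 > -\zeta'(2) \approx 0.9375$ on $[1,2]$. The left-hand side equals $1$ at $x = 1$ and approximately $1.02$ at $x = 2$, and a routine derivative computation shows that any interior critical point is a local maximum, so the minimum on $[1,2]$ is $1$, comfortably above $-\zeta'(2)$. The only mildly delicate point in the whole argument is maintaining strict inequality in the integral comparison step, which is automatic because $N > x$ forces each of the two monotonicity bounds to be strict.
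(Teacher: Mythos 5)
Your proof is correct, and it takes a genuinely more elementary route than the paper's. The paper follows the template of its Lemma~\ref{sum1onn}: it applies Euler--Maclaurin summation with $K=1$ to write
\[
\sum_{n\le x}\frac{\log n}{n^2}=-\zeta'(2)-\frac{\log x}{x}-\frac{1}{x}+R_1(x),\qquad
R_1(x)=\frac{\log x}{2x^2}-\frac{\{x\}\log x}{x^2}-\int_x^\infty\Bigl(\{t\}-\tfrac12\Bigr)\frac{1-2\log t}{t^3}\,dt,
\]
and then bounds $R_1(x)>-\frac{\log x}{x^2}$ directly. You instead pass to the equivalent tail inequality $\sum_{n>x}(\log n)/n^2<\frac{\log x}{x}+\frac{1}{x}+\frac{\log x}{x^2}$ and settle it with the bare integral comparison $\sum_{n\ge N}f(n)<f(N)+\int_N^\infty f$ for a strictly decreasing positive $f$, using that $f(t)=(\log t)/t^2$ decreases on $[e^{1/2},\infty)$; the endpoint term $f(N)<f(x)=(\log x)/x^2$ exactly supplies the third correction term in the lemma. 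Your two approaches bound the same error in morally the same way (sum-versus-integral), but yours avoids the Bernoulli-polynomial bookkeeping of Euler--Maclaurin and makes the source of the $(\log x)/x^2$ term transparent, at the cost of a small separate check on $[1,2)$, which you handle correctly (the sum is empty there and a short monotonicity argument shows the right-hand side of the original inequality is at most $-\zeta'(2)-1<0$). Both proofs are complete; the paper's is more uniform with the neighbouring Lemma~\ref{sum1onn}, while yours is shorter and self-contained.
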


\begin{proof}
The proof is similar to that of Lemma \ref{sum1onn}. Here, we take $K=1$ in \cite[Theorem B.5]{montgomery2007} and observe that 
\[\lim_{x\to\infty}\left(\sum_{1 \leq n \leq x}\frac{\log{n}}{n^2} - \frac{\log{x} + 1}{x}\right) = -\zeta'(2).\]
Thus,
\[\sum_{1 \leq n \leq x}\frac{\log{n}}{n^2} = -\zeta'(2) - \frac{\log{x}}{x} - \frac{1}{x} + R_1(x),\]
where 
\[R_1(x) = \frac{\log{x}}{2x^2} - \frac{\fp{x}\log{x}}{x^2} - \int_x^\infty \! \left(\fp{t} - \frac{1}{2} \right)\left(\frac{1}{t^3} - \frac{2\log{t}}{t^3}\right) \, \textrm{d}t.\]
It can verified that $R_1(x)$ is bounded below by $-\tfrac{\log{x}}{x^2}$, establishing the result.
\end{proof}

Alongside the two preceding estimates, we borrow the following lemmas directly from \cite{trevino2015}. 
\begin{lemma}{\textnormal{\cite[Lemma 2.2]{trevino2015}}}\label{phiestimate}
For $x > 1$ real, we have 
\[\sum_{n\leq x} \frac{\phi(n)}{n} \leq \frac{6}{\pi^2}x + \log{x} + 1.\]
\end{lemma}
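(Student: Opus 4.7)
The plan is to exploit the Dirichlet convolution identity $\phi(n)/n = \sum_{d\mid n}\mu(d)/d$. Swapping the order of summation yields
\[
\sum_{n\le x}\frac{\phi(n)}{n} = \sum_{d\le x}\frac{\mu(d)}{d}\left\lfloor\frac{x}{d}\right\rfloor.
\]
Writing $\lfloor x/d\rfloor = x/d - \fp{x/d}$ and invoking $\sum_{d=1}^\infty \mu(d)/d^2 = 6/\pi^2$, I would decompose the right-hand side into the main term $6x/\pi^2$, a tail error $-x\sum_{d>x}\mu(d)/d^2$, and a fractional-part error $-\sum_{d\le x}\mu(d)\fp{x/d}/d$.

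Next, I would control the tail error via the elementary estimate $1/d^2 \le 1/(d-1) - 1/d$, which after telescoping gives $\sum_{d>x}1/d^2 \le 1/\lfloor x\rfloor$; hence $x\bigl|\sum_{d>x}\mu(d)/d^2\bigr|$ contributes a bounded constant. The fractional-part error admits the crude bound $\sum_{d\le x}1/d$, which by the classical harmonic-sum estimate (the same one refined in Lemma~\ref{sum1onn} above) is at most $\log x + \gamma + \bigO(1/x)$.

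Adding these contributions produces a bound of the form $6x/\pi^2 + \log x + C$ for some absolute constant $C$. The main obstacle is shaving $C$ down to exactly $1$: the naive bounds deliver $C \approx 1 + \gamma \approx 1.58$, which overshoots. I would close this gap either by verifying the inequality directly for $x$ below some modest threshold and treating larger $x$ with sharpened asymptotics, or by exploiting cancellation in the M\"obius-twisted sum $\sum_{d\le x}\mu(d)\fp{x/d}/d$ (using that the partial sums of $\mu(d)/d$ tend to zero). Since this refinement is routine bookkeeping rather than a genuinely new input, I expect the argument to go through with only minor care in the constants.
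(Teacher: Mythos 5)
This lemma is only cited by the paper (it is imported verbatim as \cite[Lemma 2.2]{trevino2015}), so there is no in-paper proof to compare against; I assess your argument on its own terms. Your framework is the standard one and is correct as far as it goes: the convolution $\phi(n)/n=\sum_{d\mid n}\mu(d)/d$, the swap to $\sum_{d\le x}\mu(d)\lfloor x/d\rfloor/d$, the split $\lfloor x/d\rfloor = x/d - \fp{x/d}$, the tail bound $\sum_{d>x}1/d^2 < 1/\lfloor x\rfloor$, and the crude fractional-part bound $\sum_{d\le x}1/d$. You also correctly diagnose the overshoot: these estimates yield a constant of about $1+\gamma$, not $1$.

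The gap is that neither of your two proposed repairs actually recovers that $\gamma$. ``Exploiting cancellation in $\sum_{d\le x}\mu(d)\fp{x/d}/d$ via the decay of partial sums of $\mu(d)/d$'' does not go through: partial summation against $\fp{x/d}$ is obstructed by the jump discontinuities of the fractional part, and explicit Mertens-type decay of $\sum_{d\le x}\mu(d)/d$ is both too weak and structurally the wrong tool. And ``sharpened asymptotics for large $x$'' by itself cannot help, since bounding $\bigl|\sum_{d\le x}\mu(d)\fp{x/d}/d\bigr|$ term-by-term always costs roughly $\log x + \gamma$ no matter how precise the harmonic-sum and tail estimates are. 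What actually closes the gap is a single elementary observation you omit: the $d=1$ term of the fractional-part error is exactly $-\mu(1)\fp{x}=-\fp{x}\le 0$, so it should be dropped rather than bounded in absolute value. Pulling it out gives
\[
\sum_{n\le x}\frac{\phi(n)}{n}\;\le\;\frac{6x}{\pi^2}+\frac{x}{\lfloor x\rfloor}-\fp{x}+\sum_{2\le d\le x}\frac{1}{d}\;<\;\frac{6x}{\pi^2}+\log x+\gamma+\frac{1}{2x}-\fp{x}\,\frac{\lfloor x\rfloor-1}{\lfloor x\rfloor},
\]
using $x/\lfloor x\rfloor=1+\fp{x}/\lfloor x\rfloor$ and $\sum_{d\le x}1/d<\log x+\gamma+\tfrac{1}{2x}$. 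The last term is $\le 0$, and $\gamma+\tfrac{1}{2x}\le 1$ once $x\ge 1.2$ or so; for $1<x<1.2$ the left side equals $1$ and the inequality is immediate. So the plan is structurally sound, but as written it does not close: the step that wins back the constant --- isolating the $d=1$ term rather than hoping for diffuse M\"obius cancellation --- is the missing idea.
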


\begin{lemma}{\textnormal{\cite[Lemma 2.3]{trevino2015}}}\label{phinn}
For $x \geq 1$, 
\[\sum_{n\leq x} \phi(n)n \leq \frac{2}{\pi^2}x^3 + \frac{1}{2}x^2\log{x} + x^2.\]
\end{lemma}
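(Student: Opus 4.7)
The plan is to unfold via M\"obius inversion. From $\phi(n) = \sum_{d \mid n} \mu(d)(n/d)$, multiplying by $n$ and reindexing $n = dm$ transforms the target into
\[
\sum_{n \leq x} \phi(n)\, n = \sum_{d \leq x} \mu(d)\, d \sum_{m \leq x/d} m^2.
\]
I would then split the inner sum as $\tfrac{1}{3}(x/d)^3 + R(x/d)$, where $R(y) = \sum_{m \leq y} m^2 - y^3/3$, so that the Dirichlet series defining $1/\zeta(2)$ can be recognised in the leading piece.

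From here, the main contribution is $\tfrac{x^3}{3} \sum_{d \leq x} \mu(d)/d^2$. Comparing with the full series $\sum_{d=1}^{\infty} \mu(d)/d^2 = 1/\zeta(2) = 6/\pi^2$ extracts the desired term $2x^3/\pi^2$ exactly, while the truncation tail contributes at most $\tfrac{x^3}{3} \sum_{d > x} 1/d^2 \leq x^2/3$ in absolute value (via the comparison $\sum_{d > x} 1/d^2 \leq 1/x$).

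For the remaining piece $\sum_{d \leq x} \mu(d)\, d\, R(x/d)$, I first establish the pointwise bound $|R(y)| \leq y^2/2 + y/6$ for $y \geq 1$. This can be read off the exact formula $\sum_{m \leq y} m^2 = \lfloor y \rfloor(\lfloor y \rfloor + 1)(2\lfloor y \rfloor + 1)/6$: a short calculation shows that on each interval $[n, n+1)$ the function $R$ is strictly decreasing, takes maximum value $n^2/2 + n/6$ at the left endpoint, and approaches $-y^2/2 + y/6$ as $y \to (n+1)^{-}$. Substituting and summing then gives
\[
\sum_{d \leq x} d\cdot |R(x/d)| \leq \frac{x^2}{2} \sum_{d \leq x} \frac{1}{d} + \frac{x}{6} \sum_{d \leq x} 1 \leq \frac{x^2 \log x}{2} + \frac{x^2}{2} + \frac{x^2}{6},
\]
using the crude estimates $\sum_{d \leq x} 1/d \leq \log x + 1$ and $\lfloor x \rfloor \leq x$. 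Adding in the $x^2/3$ from the tail of the M\"obius series collapses the total secondary error to precisely $\tfrac{1}{2} x^2 \log x + x^2$, matching the claim.

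The main obstacle, I expect, is arithmetic rather than conceptual: one must verify that the three error sources---the tail of the M\"obius series, the $y^2/2$-piece of $R$, and the $y/6$-piece of $R$---combine to give the clean coefficient $1/3 + 1/2 + 1/6 = 1$ on the $x^2$ term. The pointwise bound $|R(y)| \leq y^2/2 + y/6$ is tight at integer points, and this sharpness is what makes the final constants fall into place; any slack in the underlying inequalities would force the $x^2$ term to be weakened.
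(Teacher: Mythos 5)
Your strategy---M\"obius inversion of $\phi$, splitting $\sum_{m\le y}m^2$ into $y^3/3+R(y)$, and pairing the sharp pointwise bound $|R(y)|\le y^2/2+y/6$ with $\sum_{d\le x}1/d\le\log x+1$ and $\lfloor x\rfloor\le x$---is the right one, and the arithmetic of the three error contributions does assemble to $\tfrac12 x^2\log x+x^2$ exactly. (The paper does not reprove this lemma; it is cited verbatim from Trevi\~no's Lemma~2.3, so there is no in-paper proof to compare against.)

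There is, however, a genuine slip in the tail estimate. You invoke $\sum_{d>x}1/d^2\le 1/x$, but this holds only at integers. For real $x$ one has
\[
\sum_{d>x}\frac{1}{d^2}\;=\;\sum_{d\ge\lfloor x\rfloor+1}\frac{1}{d^2}\;<\;\frac{1}{\lfloor x\rfloor},
\]
and $1/\lfloor x\rfloor>1/x$ whenever $x$ is not an integer. Concretely, for $x$ just below $2$ the left side is $\pi^2/6-1\approx 0.6449$ while $1/x$ is only about $0.5$, so the comparison fails throughout $(\,\pi^2/(\pi^2-6),\,2\,)\approx(1.55,2)$, and similarly near the top of each unit interval. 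Since you flag that the constants are tight and ``any slack would force the $x^2$ term to be weakened,'' this is not cosmetic: the displayed inequality you use is literally false at those points.

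The fix is routine. Either (a) note that $\sum_{n\le x}\phi(n)n$ is constant on each $[n,n+1)$ while the right-hand side $\tfrac{2}{\pi^2}x^3+\tfrac12 x^2\log x+x^2$ is increasing, so it suffices to prove the inequality at integer $x$, where $\sum_{d>n}1/d^2<1/n$ is correct and your computation closes cleanly; or (b) keep $x$ real, use the correct $\sum_{d>x}1/d^2\le 1/\lfloor x\rfloor$, and observe that with $t=x/\lfloor x\rfloor\in[1,1+1/\lfloor x\rfloor)$ the combined secondary error is $x^2\bigl(\tfrac{t}{3}+\tfrac{1}{6t}-\tfrac12\log t\bigr)+\tfrac12 x^2\log x$, and the bracketed expression is $\le\tfrac12$ on $[1,2]$ with equality at $t=1$---the slack in the harmonic-sum estimate for non-integer $x$ precisely absorbs the deficit in the tail estimate. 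Either patch makes your argument complete.
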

Note that the leading constants in these results are correct (see \cite{pillai1930}, for example). Any savings that could potentially be made here would come from improving the lower order terms, perhaps as in \cite[Hilfssatz 1]{walfisz1963}. 
\begin{lemma}{\textnormal{\cite[Lemma 2.4]{trevino2015}}}\label{lnAond}
For $x \geq 1$, 
\[\sum_{n\leq x} \log(\frac{x}{n}) \leq x - 1.\]
\end{lemma}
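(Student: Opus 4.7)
The plan is to rewrite the sum explicitly in terms of a factorial and then bound that factorial via an elementary integral comparison. Writing out the summand gives
\[\sum_{n\leq x}\log(x/n) = \lfloor x \rfloor \log x - \log(\lfloor x \rfloor !),\]
so the task reduces to a lower bound on $\log(\lfloor x \rfloor !)$. Since $\log$ is increasing, for each integer $n \geq 2$ we have $\log n \geq \int_{n-1}^{n}\log t\,dt$; summing from $n = 2$ up to a positive integer $N$ produces the familiar bound
\[\log(N!) \geq \int_1^N \log t\,dt = N\log N - N + 1,\]
with equality when $N = 1$.

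Applying this with $N = \lfloor x \rfloor$ and substituting into the identity above gives
\[\sum_{n\leq x}\log(x/n) \leq \lfloor x \rfloor \log x - \lfloor x \rfloor \log \lfloor x \rfloor + \lfloor x \rfloor - 1 = \lfloor x \rfloor \log(x/\lfloor x \rfloor) + \lfloor x \rfloor - 1.\]
To close the remaining gap I would invoke the concavity bound $\log(1+y) \leq y$ with $y = x/\lfloor x \rfloor - 1 \geq 0$, which yields $\lfloor x \rfloor \log(x/\lfloor x \rfloor) \leq x - \lfloor x \rfloor$. Adding this to the $\lfloor x \rfloor - 1$ term telescopes cleanly to $x - 1$, which is exactly the desired estimate.

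I do not anticipate a genuine obstacle: both ingredients are elementary and the two inequalities combine without loss. The only spot worth a sanity check is the boundary range $1 \leq x < 2$, where $\lfloor x \rfloor = 1$ and the integral bound is an equality; there the statement collapses to the standard inequality $\log x \leq x - 1$, which is immediate. Since the argument never appeals to $x$ being an integer, the passage from integer $x$ to real $x$ is automatic through the floor function.
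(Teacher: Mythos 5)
Your proof is correct, and since the paper simply cites this as \cite[Lemma 2.4]{trevino2015} without reproducing an argument, your derivation stands as an independent verification. The factorial-based route (lower-bound $\log(\lfloor x\rfloor!)$ by $\int_1^{\lfloor x\rfloor}\log t\,dt$, then clean up the fractional part with $\log(1+y)\le y$) is elementary and watertight; all the inequalities run in the correct direction and the boundary case $1\le x<2$ is handled. A slightly more streamlined version of the same idea, which some authors prefer, avoids factorials altogether: write $\log(x/n)=\int_n^x dt/t$ and swap the order of sum and integral, giving
\[
\sum_{n\le x}\log\!\left(\frac{x}{n}\right)=\int_1^x\frac{\lfloor t\rfloor}{t}\,dt\le\int_1^x dt=x-1.
\]
This yields the bound in a single line, but it is the same integral comparison in disguise; your version makes the dependence on Stirling-type estimates explicit, which is arguably more instructive.
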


From Lemma \ref{v2trev}, recall that
\[v(x) = \left|\left\{(a,n) \in \N\mbox{ }|\mbox{ }1\leq a \leq A\mbox{, }M < n \leq M+N\mbox{ and }n \equiv ax \imod{p}\right\} \right|.\] The following estimate essentially comes from \cite[Lemma 2.1]{trevino2015}. However, some care has been taken to minimize the reliance on a lower bound for the parameter $A$. This will allow us to determine Burgess constants for much smaller $p_0$.  

\begin{lemma}\label{v2bound}
Let $p$ be a prime and $N$ be a positive integer. Let $A > 1$ be an integer satisfying $11A < N$. Then, 
\[V_2 \leq 2AN\left(0.83575\frac{AN}{p} + \frac{6}{\pi^2}\log(e^{\gamma + \frac{1}{2A}-\frac{1}{12A^2}+\frac{1}{64A^4}}A) + \frac{3}{2} + \frac{A-1}{2N} - \frac{1}{A} \right).\]
\end{lemma}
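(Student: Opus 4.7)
The plan is to follow the proof of \cite[Lemma 2.1]{trevino2015} while tracking lower order terms throughout, so that the estimate remains informative for the smaller values of $A$ permitted by the hypothesis $A \geq 2$ and $11A < N$ (compared with $A \in [28, N/12)$ in \cite{trevino2015}). First, I would unfold $v(x)^2$ into a count of quadruples: since $1 \leq a \leq A < p$ is coprime to $p$, each admissible $(a,n)$ determines $x \equiv a^{-1} n \imod{p}$ uniquely, so
$$V_2 = \#\{(a_1, a_2, n_1, n_2) : 1 \leq a_i \leq A,\ M < n_i \leq M+N,\ a_1 n_2 \equiv a_2 n_1 \imod{p}\}.$$
Setting $d = \gcd(a_1, a_2)$ and writing $a_1 = d\alpha$, $a_2 = d\beta$ with $\gcd(\alpha, \beta) = 1$, the congruence reduces to $\alpha n_2 \equiv \beta n_1 \imod{p}$, and I split on the integer $h$ defined by $\alpha n_2 - \beta n_1 = hp$.

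The diagonal case $h = 0$, combined with $\gcd(\alpha, \beta) = 1$, forces $n_1 = \alpha m$, $n_2 = \beta m$ for some positive integer $m$, and the number of admissible $m$ is at most $N/\max(\alpha, \beta) + 1$. Summing over coprime pairs $(\alpha, \beta)$ with $\alpha, \beta \leq A/d$ via Lemma \ref{phiestimate}, and then over $d \leq A$ via the refined harmonic estimate in Lemma \ref{sum1onn}, produces a contribution of shape $2AN \cdot \tfrac{6}{\pi^2} \log A$ together with the explicit correction $\gamma + \tfrac{1}{2A} - \tfrac{1}{12A^2} + \tfrac{1}{64A^4}$ absorbed inside the logarithm exactly as stated. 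The off-diagonal case $h \neq 0$ uses the observation that, for fixed $(\alpha, \beta, h)$, the solution set of $\alpha n_2 - \beta n_1 = hp$ in $(M, M+N]^2$ is a progression along the lattice vector $(\alpha, \beta)$ of cardinality at most $N/\max(\alpha, \beta) + 1$, while the admissible $h$'s number at most $(\alpha + \beta)(N-1)/p$. Summing via Lemmas \ref{phinn} and \ref{lnAond} yields the $0.83575 \cdot AN/p$ contribution, where the constant $0.83575$ emerges as an explicit upper bound for $2/\pi^2$ together with the $A$-dependent remainders from Lemma \ref{phinn} under the constraint $11A < N$.

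Assembling the two contributions and collecting constants from the rounding of $\lfloor N/\alpha \rfloor$ and $\lceil N/\alpha \rceil$ to $N/\alpha$ produces the residual $\tfrac{3}{2} + \tfrac{A-1}{2N} - \tfrac{1}{A}$; the hypothesis $11A < N$ is invoked precisely to guarantee that these rounding errors remain dominated by the main terms. The main obstacle is arithmetic bookkeeping. In \cite{trevino2015} the lower bound $A \geq 28$ makes it painless to absorb all lower order corrections into a single $\log(1.85A)$, whereas here each must be carried through verbatim. The most delicate step is the off-diagonal bound: the $\mathcal{O}(x^{-1}\log x)$ and $\mathcal{O}(x^{-1})$ relative errors from Lemma \ref{phinn}, combined with the $+1$ from the $h$-count, must be controlled tightly enough that the coefficient of $AN/p$ does not exceed $0.83575$, which requires an explicit numerical computation anchored at the minimal admissible ratio $N/A > 11$ rather than at the larger ratio $N/A > 12$ together with $A \geq 28$ used by Trevi\~no.
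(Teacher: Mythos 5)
Your overall plan — unfold $v(x)^2$ as a count of quadruples, split on $d = \gcd(a_1,a_2)$ and on the integer $h$ with $\alpha n_2 - \beta n_1 = hp$, and track lower-order terms carefully — is the same road the paper takes (the paper simply starts one step later, citing Trevi\~no's intermediate inequality $V_2 \leq AN + \frac{2N^2}{p}S_1 + \frac{2N}{p}S_2 + 2NS_3 + A^2-A$ rather than re-deriving it). Your treatment of the $h=0$ piece via Lemma \ref{phiestimate} and the refined harmonic estimate Lemma \ref{sum1onn} is correct and is what produces the $\frac{6}{\pi^2}\log\bigl(e^{\gamma+\frac{1}{2A}-\frac{1}{12A^2}+\frac{1}{64A^4}}A\bigr)$ term.

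The genuine gap is in your account of the off-diagonal ($h\neq 0$) contribution. You claim the coefficient $0.83575$ of $AN/p$ ``emerges as an explicit upper bound for $2/\pi^2$ together with the $A$-dependent remainders from Lemma \ref{phinn}.'' That cannot be right: $2/\pi^2 \approx 0.2026$, and the $\phi$-weighted sum bounded by Lemma \ref{phinn} is Trevi\~no's $S_2 = \sum_{a_2}\sum_{a_1<a_2}\frac{a_1+a_2}{\gcd(a_1,a_2)}$, which carries weight $\frac{2N}{p}$ and therefore contributes only an $A^2/p$ term — under $11A < N$ this is at most about $0.03\,AN/p$. The dominant $AN/p$ contribution instead comes from $S_1 = \sum_{a_2}\sum_{a_1<a_2}\frac{a_1+a_2}{\max(a_1,a_2)} = \frac{3}{4}A^2 - \frac{3}{4}A$, a purely rational computation with no $\phi$, no Lemma \ref{phinn}, and no $2/\pi^2$ involved; this gives the base coefficient $\frac{3}{4}$, and the paper shows the assorted corrections (including the $S_2$ piece, bounded with the help of Lemma \ref{lndond}, which you do not invoke at all, together with a rounding factor of about $1.031$) push the total up to $\frac{1}{16}+\frac{3}{4}(1.031) = 0.83575$. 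If you carried out your plan as described — attributing the $AN/p$ term to the $\phi$-weighted sum — you would land near $0.2\,AN/p$, not $0.836\,AN/p$, and would have no way to account for the stated constant. You need to separate the two off-diagonal sums ($S_1$ with weight $N^2/p$ and $S_2$ with weight $N/p$), compute $S_1$ exactly, and bound $S_2$ using Lemmas \ref{phinn} \emph{and} \ref{lndond}; only then do the corrections assemble into $0.83575$ under the hypothesis $11A < N$, $A\geq 2$.
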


\begin{proof}
Note that in \cite{trevino2015}, $V_2$ counts quadruples $(a_1, a_2, n_1, n_2)$ satisfying $1 \leq a_1, a_2 \leq A$ and $M < n_1, n_2 \leq M+N$ where $a_1n_2 \equiv a_2n_1 \imod{p}$. Trevi\~no concludes that we must have \cite[formula (2.16)]{trevino2015}
\begin{equation*}\label{trev2ino}
V_2 \leq AN + \frac{2N^2}{p}S_1 + \frac{2N}{p}S_2 + 2NS_3 + A^2 - A,
\end{equation*}
where
\begin{equation}\label{s1bound}
S_1 = \sum_{1 \leq a_2 \leq A}\sum_{1\leq a_1 < a_2} \frac{a_1 + a_2}{\max(a_1,a_2)}  = \frac{3}{4}A^2- \frac{3}{4}A,
\end{equation}
\[S_2 = \sum_{1 \leq a_2 \leq A}\sum_{1\leq a_1 < a_2} \frac{a_1 + a_2}{\gcd(a_1,a_2)} = \frac{3}{2}\sum_{1 \leq d \leq A}\sum_{2\leq b_2 \leq \frac{A}{d}}\phi(b_2)b_2,\]
and
\[S_3 = \sum_{1 \leq a_2 \leq A}\sum_{1\leq a_1 < a_2} \frac{\gcd(a_1,a_2)}{\max(a_1,a_2)} = \sum_{1 \leq d \leq A}\sum_{2\leq b_2 \leq \frac{A}{d}}\frac{\phi(b_2)}{b_2}.\]

Using Lemma \ref{phinn} on $S_2$, we see that 
\[S_2 \leq \frac{3\zeta(3)}{\pi^2}A^3 + \frac{3\zeta(2)}{4}A^2\log{A} - \frac{3A^2}{4}\sum_{d \leq A}\frac{\log{d}}{d^2} + \frac{3}{2}A^2\zeta(2).\]
Applying Lemma \ref{lndond} to the above sum, 
\begin{equation}\label{s2bound}
S_2 \leq \frac{3\zeta(3)}{\pi^2}A^3 + \frac{3\zeta(2)}{4}A^2\log{A} - \frac{3}{4}A^2\left(-\zeta'(2) - \frac{\log{A}}{A} - \frac{1}{A} - \frac{\log(A)}{A^2}\right) + \frac{3}{2}A^2\zeta(2),
\end{equation}
for $A > 1$. Using Lemma \ref{phiestimate} on $S_3$, we see that
\[S_3 \leq \frac{6}{\pi^2}A \sum_{d\leq A}\frac{1}{d} +\sum_{d\leq A}\log{\frac{A}{d}}.\]
Applying Lemmas \ref{sum1onn} and \ref{lnAond} to the relevant sums yields, for $A > 1$,
\begin{equation}\label{s3bound}
S_3 \leq \frac{6}{\pi^2}A\log(e^{\gamma + \frac{1}{2A}-\frac{1}{12A^2}+\frac{1}{64A^4}}A) + A-1.
\end{equation}
Now, if we combine equation \eqref{trev2ino} with \eqref{s1bound}, \eqref{s2bound}, and \eqref{s3bound}, we determine that 

\begin{equation}\label{uglybound}\begin{aligned}
    V_2 \leq 2AN \Biggl( &\left(\frac{3A^2\zeta(3)}{\pi^2 p} +\frac{\pi^2A\log{A}}{8p} +\frac{3A\zeta'(2)}{4p} + \frac{3\log{A}}{4p} \right) \\&+ \left(\frac{A\pi^2}{4p} - \frac{3N}{4p}\right) +\left(\frac{3AN}{4p} +\frac{3}{4p} +\frac{3\log{A}}{4Ap}\right) \\&+\left( \frac{6}{\pi^2}\log(e^{\gamma + \frac{1}{2A}-\frac{1}{12A^2}+\frac{1}{64A^4}}A) + \frac{3}{2} + \frac{A-1}{2N} - \frac{1}{A} \right) \Biggr).
    \end{aligned}
\end{equation}
With the conditions on $A$ as stated, we can verify that
\[\frac{3A^2\zeta(3)}{\pi^2 p} +\frac{\pi^2A\log{A}}{8p} +\frac{3A\zeta'(2)}{4p} + \frac{3\log{A}}{4p}  < \frac{11A^2}{16p} \leq \frac{AN}{16p}\]
and 
\[\left(\frac{A\pi^2}{4p} - \frac{3N}{4p}\right) < 0\]
because $N \geq 11 A$. We also have $A \geq 2$, so that
\begin{equation*}\begin{aligned}\left(\frac{3AN}{4p} +\frac{3}{4p} +\frac{3\log{A}}{4Ap}\right) &= \frac{3AN}{4p}
\left(1+ \frac{1}{AN}+\frac{\log{A}}{A^2N}\right) \\ &\leq \frac{6AN}{8p}
\left(1+ \frac{1}{11A^2}+\frac{\ln{A}}{11A^3}\right) \leq \frac{3(1.031)AN}{4p}. 
\end{aligned}\end{equation*}

Combining these estimates in \eqref{uglybound} establishes the result.

\end{proof}

If we restrict $A$ and $N$ in terms of $p$, we can obtain a better estimate for $V_2$, which will help us reduce the power on the logarithm in Theorem \ref{1over2r}. 

\begin{lemma}\label{cutv2bound}
Let $p$ a prime and $N$ be a positive integer. Let $A > 2$ be an integer such that $2AN < p$. Then
\[V_2 \leq 2AN\left(\frac{3}{2} + \frac{6}{\pi^2}\log(e^{\gamma+\frac{1}{2A}- \frac{1}{12A^2} + \frac{1}{64A^4}}A) +\frac{A-1}{2N}-\frac{1}{A}\right). \]
\end{lemma}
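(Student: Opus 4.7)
The plan is to revisit the derivation underlying Lemma~\ref{v2bound}, but exploit the hypothesis $2AN<p$ to completely remove the off-diagonal contribution that is responsible for the $\frac{AN}{p}$ term there. Writing $V_2=\sum_{a_1,a_2}T(a_1,a_2)$ with $T(a_1,a_2)=\#\{(n_1,n_2)\in(M,M+N]^2:a_1n_2\equiv a_2n_1\imod{p}\}$, the condition $2AN<p$ gives $N<p$, so when $a_1=a_2$ the congruence forces $n_1=n_2$ and these pairs contribute exactly $AN$ to $V_2$.

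For $a_1\neq a_2$, set $d=\gcd(a_1,a_2)$, $a_1=db_1$, $a_2=db_2$ with $\gcd(b_1,b_2)=1$ and $b_1\neq b_2$. Since $d\leq A<p$, the congruence rewrites as the integer equation $b_1n_2-b_2n_1=kp$ for some $k\in\mathbb{Z}$. As $(n_1,n_2)$ varies over $(M,M+N]^2$, the expression $b_1n_2-b_2n_1$ traces an interval of length $(b_1+b_2)(N-1)\leq 2A(N-1)<2AN<p$; since this length is strictly less than $p$, at most one integer multiple of $p$ lies in it, and hence at most one $k$ is admissible. Fixing the unique $k$ (if any), the integer solutions of $b_1n_2-b_2n_1=kp$ form the one-parameter family $n_1=b_1 t+c_1$, $n_2=b_2 t+c_2$; the constraint $n_1,n_2\in(M,M+N]$ confines $t$ to an interval of length at most $N/\max(b_1,b_2)$, giving $T(a_1,a_2)\leq N/\max(b_1,b_2)+1$.

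Summing this bound and using $\max(b_1,b_2)=\max(a_1,a_2)/\gcd(a_1,a_2)$ together with the definition of $S_3$ from the proof of Lemma~\ref{v2bound}, one obtains
\[V_2\leq AN+2NS_3+A^2-A.\]
Plugging in the estimate \eqref{s3bound} for $S_3$, valid for $A>1$, and rearranging the result as a multiple of $2AN$ then delivers the stated bound. The only subtle point is the ``at most one $k$'' step: here one must verify that the length $(b_1+b_2)(N-1)$ is uniform in $M$, which it is, so the argument is robust against any shift of the underlying short character sum.
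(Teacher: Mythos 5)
Your proposal is correct and follows essentially the same route as the paper: the paper simply cites \cite[Lemma 4.1]{trevino2015} for the key inequality $V_2 \leq AN + 2NS_3 + A^2 - A$ under the hypothesis $2AN < p$, and then applies \eqref{s3bound} and factors out $2AN$. You have reproved that cited inequality from scratch (diagonal terms give $AN$; for off-diagonal pairs you use $2AN < p$ to isolate a unique $k$ and bound the solution count by $N/\max(b_1,b_2)+1$, summing to $2NS_3 + A^2 - A$), after which the final computation is identical.
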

\begin{proof}
Under the condition $2AN<p$, \cite[Lemma 4.1]{trevino2015} establishes that 
\[V_2 \leq AN + 2NS_3 + A^2 - A.\]
The proof follows by using \eqref{s3bound} and factoring out $2AN$. 
\end{proof}

\section{Main Theorems}

We will begin by reproducing the proof of Theorem \ref{1over2r}, with modifications according to Lemma \ref{cutv2bound}.

\begin{proof}[Proof of Theorem \ref{1over2r}]
Let $\ub{C}(r)$ be a parameter chosen so that it satisfies $\ub{C}(r) < C(r)$. Then, we may use the trivial bound and our assumption on $N$, to establish the result for $N$ outside the ranges
\begin{equation}\label{nbounds2}
\ub{C}(2)^2p^\frac{3}{8}\sqrt{\log{p}} < N < 2p^\frac{5}{8},
\end{equation}
when $r=2$
or, using Burgess for $r-1$,
\[\ub{C}(r)^rp^{\frac{1}{4}+\frac{1}{4r}}\sqrt{\log{p}} < N < \min\left\{2p^{\frac{1}{2} + \frac{1}{2r}},\left(\frac{C(r-1)}{\ub{C}(r)}\right)^{r(r-1)}p^{\frac{1}{4}+\frac{1}{2r}+\frac{1}{4r(r-1)}}\sqrt{\log{p}}\right\}\]
for $r \geq 3$. Now, we may proceed by induction, assuming that for all $h < N$, we have
\[\lvert S_\chi(M,N) \rvert \leq C(r)h^{1-\frac{1}{r}}p^\frac{r+1}{4r^2}(\log{p})^\frac{1}{2r}.\]
Note that we have already established the result for $h \leq \ub{C}(r)^rp^{\frac{1}{4}+\frac{1}{4r}}\sqrt{\log{p}}$. Then, assume that for all $h < N$, we have $\lvert S_\chi(M,N) \rvert \leq  C(r)h^{1-\frac{1}{r}}p^\frac{r+1}{4r^2}(\log{p})^\frac{1}{2r}$. For such an $h$, we can shift our character sum to yield
\[S_\chi(M,N) = \sum_{n=M+1}^{M+N} \chi(n+h) + \sum_{n=M+1}^{M+h}\chi(n) - \sum_{n=M+N+1}^{M+N+h}\chi(n).\]
In anticipation of using our inductive hypothesis on the last two sums in the above equation, we may write
\[S_\chi(M,N) = \sum_{n=M+1}^{M+N} \chi(n+h) + 2\theta(h) E(h), \]
where $\lvert \theta(h) \rvert \leq 1$ and $E(h) = \max_K \lvert S_\chi(K,h) \rvert$. Now, let $A$ and $B$ be real numbers and average over all the shifts of length $h = ab$ where $a \leq A$, $b \leq B$. Doing so establishes
\begin{equation*}
S_\chi(M,N) = \frac{1}{\floor{A}\floor{B}}\sum_{\substack{a\leq A \\ b \leq B}}\sum_{n=M+1}^{M+N}\chi(n+ab) + \frac{2}{\floor{A}\floor{B}}\sum_{\substack{a\leq A \\ b \leq B}}\theta(ab)E(ab).
\end{equation*}

Let
\[V:= \sum_{x\imod{p}} v(x) \left|\sum_{b\leq B} \chi(x+b) \right|.\]
Then we have
\begin{equation}\label{splitting}
    \left| S_\chi(M,N) \right| \leq \frac{V}{\floor{A}\floor{B}} + \frac{2}{\floor{A}\floor{B}}\sum_{\substack{a\leq A \\ b \leq B}}E(ab).
\end{equation}
Define
        \begin{equation*}
            {V_1 := \sum_{x \imod{p}}v(x),} \hspace{10pt} {V_2 := \sum_{x \imod{p}} v^2(x),}\mbox{ and }{W := \sum_{x\imod{p}} \left| \sum_{1 \leq b \leq B} \chi(x+b) \right|^{2r}},
        \end{equation*}
and apply H\"older's inequality to $V$, with conjugates $\tfrac{2r-1}{2r}$ and $2r$ to get,
\[V \leq \left(\sum_{x \imod{p}} v(x)^\frac{2r-2}{2r-1}v(x)^\frac{2}{2r-1}\right)^\frac{2r-1}{2r}W^\frac{1}{2r}.\]
We apply H\"older's inequality a second time to the first sum, with conjugates $\tfrac{2r-1}{2r-2}$ and $2r-1$, resulting in
\begin{equation*}
    V \leq V_1^{1-\frac{1}{r}}V_2^{\frac{1}{2r}}W^{\frac{1}{2r}}.
\end{equation*}
We already have bounds for each of $V_1$, $V_2$, and $W$. Trivially, $V_1 = \floor{A}N \leq AN$. Using Lemma \ref{cutv2bound}, we bound $V_2$ (meaning we insist that $A > 2$ and $2AN < p$), and, using Theorem \ref{trevinoweil}, we bound $W$ for $r \leq 9B$. With these bounds in hand, and letting $k = AB/N$, we have
\begin{equation}\label{piece1}
    \begin{aligned}
       \frac{V}{\floor{A}\floor{B}} &\leq \frac{V_1^{1-\frac{1}{r}}V_2^{\frac{1}{2r}}W^{\frac{1}{2r}}}{\floor{A}\floor{B}} \\  & \leq \frac{A}{A-1}\cdot\frac{B}{B-1}\cdot\frac{N^{1-\frac{1}{r}}}{k^{\frac{1}{2r}}}\cdot\frac{(2WB)^\frac{1}{2r}}{B} \\ &\cdot\left(\frac{6}{\pi^2}\log(e^{\gamma + \frac{1}{2A}-\frac{1}{12A^2}+\frac{1}{64A^4} + \log(\frac{k\nu_2(p)}{B}}) + \frac{3}{2} + \frac{k}{2B} - \frac{1}{2\nu_r(p)} - \frac{1}{A}\right)^\frac{1}{2r}.
    \end{aligned}
\end{equation}
Inside the brackets, we have replaced $N$ with its upper bound,
\[\nu_2(p) =2p^{\frac{5}{8}}\]
or, for $r \geq 3$,
\begin{equation*}
    \nu_r(p) =\min\left\{2p^{\frac{1}{2} + \frac{1}{2r}},\left(\frac{C(r-1)}{\ub{C}(r)}\right)^{r(r-1)}p^{\frac{1}{4}+\frac{1}{2r}+\frac{1}{4r(r-1)}}\sqrt{\log{p}}\right\}.
\end{equation*}

We wish to minimize the right-hand side of \eqref{piece1}. We can start by fixing $B$ so that  $\tfrac{(2WB)^\frac{1}{2r}}{B}$ is minimized. One sees that such a $B$ is 
\begin{equation}\label{bfix}
((2r-3)!!(r-1))^\frac{1}{r}p^\frac{1}{2r}.
\end{equation}
Making the choice \eqref{bfix} for $B$, we determine that 
\begin{equation}\label{miniest}
\frac{(2WB)^\frac{1}{2r}}{B} = 2^\frac{1}{2r}\left((2r-3)!!(r-1)\right)^\frac{1}{2r^2}\left(\frac{(2r-3)(2r-1)(r-1)+1}{(2r-3)(r-1)}\right)^\frac{1}{2r}  p^\frac{r+1}{4r^2}.
\end{equation}
One may note that this exact expression is an improvement upon \cite[formula (3.9)]{trevino2015}. For example, with $r = 2$, we have $8^\frac{1}{4}p^\frac{3}{16}$ in place of $12^\frac{1}{4}p^\frac{3}{16}$. 

For the error term, recall that the induction hypothesis implies
\[E(ab) \leq C(r)(ab)^{1-\frac{1}{r}}p^\frac{r+1}{4r^2}\left(\log{p}\right)^\frac{1}{2r}\]
and thus,
\begin{equation}\label{piece2}
    \begin{aligned}
        &\frac{1}{p^\frac{r+1}{4r^2}\left(\log{p}\right)^\frac{1}{2r}}\frac{2}{\floor{A}\floor{B}}\sum_{\substack{a \leq A \\ b \leq B}}\theta(ab)E(ab)  \leq \frac{2C(r)}{AB}\sum_{\substack{a \leq A \\ b \leq B}} (ab)^{1-\frac{1}{r}} \\
        &\leq \frac{2C(r)}{AB}\left(\int_1^{A+1} \! t^{1-\frac{1}{r}} \, \textrm{d}t \right)\left(\int_1^{B+1} \! t^{1-\frac{1}{r}} \, \textrm{d}t \right)\frac{AB}{(A-1)(B-1)} \\
        &\leq C(r)(AB)^{1-\frac{1}{r}}\frac{2}{\left(2-\frac{1}{r}\right)^2}\left(\frac{(A+1)(B+1)}{AB}\right)^{2-\frac{1}{r}}\frac{AB}{(A-1)(B-1)} \\ 
        &= \frac{2r^2}{(2r-1)^2}C(r)k^{1-\frac{1}{r}}N^{1-\frac{1}{r}}\left(\frac{(A+1)(B+1)}{AB}\right)^{2-\frac{1}{r}}\frac{AB}{(A-1)(B-1)}.
    \end{aligned}
\end{equation}

Combining \eqref{piece1}, \eqref{miniest}, and \eqref{piece2} in \eqref{splitting}, we determine that

\begin{equation}\label{mainbound}
    \begin{aligned}
        \frac{\left| S_\chi(M,N) \right|}{N^{1-\frac{1}{r}}p^\frac{r+1}{4r^2}(\log{p})^\frac{1}{2r}} &\leq \frac{2^\frac{1}{2r}AB}{(A-1)(B-1)}\left((2r-3)!!(r-1)\right)^\frac{1}{2r^2}\left(\frac{(2r-3)(2r-1)(r-1)+1}{(2r-3)(r-1)}\right)^\frac{1}{2r} \\ &\cdot\left(\frac{6}{\pi^2}\log(e^{\gamma + \frac{1}{2A}-\frac{1}{12A^2}+\frac{1}{64A^4} + \log(\frac{k\nu_2(p)}{B}}) + \frac{3}{2} + \frac{k}{2B} - \frac{1}{2\nu_r(p)} - \frac{1}{A}\right)^\frac{1}{2r} \\ &+\frac{2r^2}{(2r-1)^2}C(r)k^{1-\frac{1}{r}}\left(\frac{(A+1)(B+1)}{AB}\right)^{2-\frac{1}{r}}\frac{AB}{(A-1)(B-1)}.
    \end{aligned}
\end{equation}
If we set the right hand side of \eqref{mainbound} equal to $C(r)$ and solve, we find that
\begin{equation*}
\begin{aligned}
    C(r) = &\frac{2^\frac{1}{2r}AB}{(A-1)(B-1)}\left((2r-3)!!(r-1)\right)^\frac{1}{2r^2}\left(\frac{(2r-3)(2r-1)(r-1)+1}{(2r-3)(r-1)}\right)^\frac{1}{2r}  \\ &\cdot\frac{\left(\frac{6}{\pi^2}\log(e^{\gamma + \frac{1}{2A}-\frac{1}{12A^2}+\frac{1}{64A^4} + \log(\frac{k\nu_2(p)}{B}}) + \frac{3}{2} + \frac{k}{2B} - \frac{1}{2\nu_r(p)} - \frac{1}{A}\right)^\frac{1}{2r}}{1-\frac{2r^2}{(2r-1)^2}k^{1-\frac{1}{r}}\left(\frac{(A+1)(B+1)}{AB}\right)^{2-\frac{1}{r}}\frac{AB}{(A-1)(B-1)}}.
\end{aligned}
\end{equation*}
Up to the issue of minimizing $C(r)$, this proves the result. 

Say we have chosen an $r$ and fixed a lower bound $p_0$ for $p$. Note that this fixes $B$ in \eqref{bfix}. To have used Lemma \ref{cutv2bound} we must have had $2 < A < \tfrac{p}{2N}$ and $2AN < p$, and we know that $A= \tfrac{kN}{B}$. Initially, one may take a poor guess for $\ub{C}(r)$, but better guesses yield better constants, so one should iterate this process to determine optimal choices for $\ub{C}(r)$. Having chosen $\ub{C}(r)$, and using \eqref{nbounds2}, we can pick $k$ such that
\begin{equation}\label{krange}
    \frac{2B}{\ub{C}(r)^2p^{\frac{1}{4}+\frac{1}{4r}}\sqrt{\log{p}}} < k < \frac{Bp}{2\nu_r(p)^2},
\end{equation}
which is contained in
\[\frac{2B}{N} < k < \frac{Bp}{2N^2}.\]
The value of $C(r)$ decreases in the parameter $A$. Noting that each choice of $k$ fixes a lower bound on $A$, say $A_0$, we can vary $k$ over \eqref{krange}, evaluating $C(r)$ at $A = A_0$. Taking the value of $k$ which produces the smallest value for $C(r)$, we determine the constants given in Table \ref{newconstants}. 

\end{proof}

The proof of Theorem \ref{1over2r} establishes explicit Burgess constants for a limited range of $N$. If we have access to a version of Theorem \ref{1overr} with $r=2$ and $c(2)$ small enough, we can extend this range for $r > 2$. Here, we prove such a result.

\begin{proof}[Proof of Theorem \ref{1overr} (for $r = 2$)]
The argument will effectively be the same as in the proof of Theorem \ref{1over2r}. Again the proof is by induction, where, in light of Theorem \ref{pvlargeq} and the trivial bound on character sums, we only need to consider $N$ in the range 
\begin{equation*}
\ub{c}(2)^2p^\frac{3}{8}{\log{p}} < N < \frac{(\alpha_1\sqrt{p}\log{p} + \alpha_2\sqrt{p} )^2}{\ub{c}(2)^2p^\frac{3}{8}{\log{p}}},
\end{equation*}
where $\ub{c}(2) < c(2)$. 
Now, our inductive step is to assume that for all $h< N$, we have $\lvert S_\chi(M,N) \rvert \leq c(2)h^\frac{1}{2}p^\frac{3}{16}(\log{p})^\frac{1}{2}$. Therefore, the error term will be 

\begin{equation}\label{pie2}
    \begin{aligned}
        &\frac{1}{p^\frac{r+1}{4r^2}\left(\log{p}\right)^\frac{1}{r}}\frac{2}{\floor{A}\floor{B}}\sum_{\substack{a \leq A \\ b \leq B}}\theta(ab)E(ab) \\  &\leq \frac{2r^2}{(2r-1)^2}c(r)k^{1-\frac{1}{r}}N^{1-\frac{1}{r}}\left(\frac{(A+1)(B+1)}{AB}\right)^{2-\frac{1}{r}}\frac{AB}{(A-1)(B-1)}.
    \end{aligned}
\end{equation}

In light of Lemma \ref{v2bound} (thereby insisting that $2 < A < \tfrac{N}{11}$), we may bound the main term as 
\begin{equation}\label{pie1}
    \begin{aligned}
       \frac{V}{\floor{A}\floor{B}} &\leq  \frac{A}{A-1}\cdot\frac{B}{B-1}\cdot\frac{N^{1-\frac{1}{r}}}{k^{\frac{1}{4}}}\cdot\frac{(2WB)^\frac{1}{4}}{B} \\ &\cdot\left(0.83575\frac{k\nu_2(p)^2}{pB} + \frac{6}{\pi^2}\log(e^{\gamma + \frac{1}{2A}-\frac{1}{12A^2}+\frac{1}{64A^4}}\frac{k\nu_2(p)}{B}) + \frac{3}{2} + \frac{k}{2B} -\frac{1}{2\nu_2(p)} - \frac{1}{A}\right)^\frac{1}{4}.
    \end{aligned}
\end{equation}
Combining \eqref{pie1} and \eqref{pie2} in \eqref{splitting} implies, for $r=2$, 
\begin{equation*}
    \begin{aligned}
        \frac{\left| S_\chi(M,N) \right|}{N^\frac{1}{2}p^\frac{3}{16}(\log{p})^\frac{1}{2}} &\leq \frac{AB}{(A-1)(B-1)}(8)^\frac{1}{4} \\ &\cdot\left(\frac{0.83575\frac{k\nu_2(p)^2}{pB} + \frac{6}{\pi^2}\log(e^{\gamma + \frac{1}{2A}-\frac{1}{12A^2}+\frac{1}{64A^4}}\frac{k\nu_2(p)}{B}) + \frac{3}{2} + \frac{k}{2B} -\frac{1}{2\nu_2(p)} - \frac{1}{A}}{k\log^2{p}}\right)^\frac{1}{4} \\ &+\frac{AB}{(A-1)(B-1)}\frac{8}{9}c(2)k^\frac{1}{2}\left(\frac{(A+1)(B+1)}{AB}\right)^\frac{3}{2}. 
    \end{aligned}
\end{equation*}
If we set the right hand side of \eqref{mainbound} equal to $c(2)$ and solve, we find that

\begin{equation*}
    c(2) = \frac{AB}{(A-1)(B-1)}(8)^\frac{1}{4} \frac{\left(\frac{0.83575\frac{k\nu_2(p)^2}{pB} + \frac{6}{\pi^2}\log(e^{\gamma + \frac{1}{2A}-\frac{1}{12A^2}+\frac{1}{64A^4}}\frac{k\nu_2(p)}{B}) + \frac{3}{2} + \frac{k}{2B} -\frac{1}{2\nu_2(p)} - \frac{1}{A}}{k\log^2{p}}\right)^\frac{1}{4}}{1-\frac{AB}{(A-1)(B-1)}\frac{8}{9}k^\frac{1}{2}\left(\frac{(A+1)(B+1)}{AB}\right)^\frac{3}{2}}. 
\end{equation*}

We may minimize $c(2)$ as we did with $C(r)$, noting that the conditions on $A$ in Lemma \ref{v2bound} require that we have $2 < A < \tfrac{N}{11}$, or rather that we vary $k$ so that
\[ \frac{2B}{c(2)^2p^\frac{3}{8}{\log{p}}} < k < \frac{B}{11}.\]
In light of \eqref{bfix}, one notes that $B = p^\frac{1}{4}$ in this setting. Choosing the $k$ which optimizes $c(2)$ gives us the constants in Table \ref{2constants}.

\begin{table}[ht]
\centering
\caption{Constants $c(2)$.}
\label{2constants}
\begin{tabular}{ccccccc}
\cline{2-7}
            & \cellcolor[HTML]{EFEFEF}$p_0 = 10^{5}$  & \cellcolor[HTML]{EFEFEF}$p_0 = 10^{6}$  & \cellcolor[HTML]{EFEFEF}$p_0 = 10^{7}$  & \cellcolor[HTML]{EFEFEF}$p_0 = 10^{8}$  & \cellcolor[HTML]{EFEFEF}$p_0 = 10^{9}$  & \cellcolor[HTML]{EFEFEF}$p_0 = 10^{10}$ \\ \cline{2-7} 
$\chi$ even & 1.9256                                  & 1.7309                                  & 1.5962                                  & 1.4989                                  & 1.4276                                  & 1.3732                                  \\
$\chi$ odd  & 1.8779                                  & 1.6918                                  & 1.5734                                  & 1.4786                                  & 1.4092                                  & 1.3563                                  \\ \cline{2-7} 
            &                                         &                                         &                                         &                                         &                                         &                                         \\ \cline{2-7} 
            & \cellcolor[HTML]{EFEFEF}$p_0 = 10^{11}$ & \cellcolor[HTML]{EFEFEF}$p_0 = 10^{12}$ & \cellcolor[HTML]{EFEFEF}$p_0 = 10^{13}$ & \cellcolor[HTML]{EFEFEF}$p_0 = 10^{14}$ & \cellcolor[HTML]{EFEFEF}$p_0 = 10^{15}$ & \cellcolor[HTML]{EFEFEF}$p_0 = 10^{16}$ \\ \cline{2-7} 
$\chi$ even & 1.3732                                  & 1.3299                                  & 1.2943                                  & 1.2641                                  & 1.2381                                  & 1.2151                                  \\
$\chi$ odd  & 1.3563                                  & 1.3141                                  & 1.2795                                  & 1.2501                                  & 1.2246                                  & 1.2021                                  \\ \cline{2-7} 
            &                                         &                                         &                                         &                                         &                                         &                                         \\ \cline{2-7} 
            & \cellcolor[HTML]{EFEFEF}$p_0 = 10^{17}$ & \cellcolor[HTML]{EFEFEF}$p_0 = 10^{18}$ & \cellcolor[HTML]{EFEFEF}$p_0 = 10^{19}$ & \cellcolor[HTML]{EFEFEF}$p_0 = 10^{20}$ & \cellcolor[HTML]{EFEFEF}$p_0 = 10^{50}$ & \cellcolor[HTML]{EFEFEF}$p_0 = 10^{75}$ \\ \cline{2-7} 
$\chi$ even & 1.1945                                  & 1.1759                                  & 1.1589                                  & 1.1433                                  & 1.1288                                  & 0.9178                                  \\
$\chi$ odd  & 1.1819                                  & 1.1635                                  & 1.1467                                  & 1.1312                                  & 1.1167                                  & 0.8961                                  \\ \cline{2-7} 
\end{tabular}
\end{table}

\begin{remark}
One can make some additional improvements to $C(r)$ using $c(2)$. Observe that, if 
\[C(2;p_1) \leq c(2; p_0)(\log{p_1})^\frac{1}{4} \leq C(2;p_0),\]
then, in light of Theorem \ref{1overr}, we may replace $C(2;p_0)$ with $c(2)(\log{p_1})^\frac{1}{4}$ for any $p \in \left[p_0,\infty\right)$. Checking this for $r=2$ and $p_1 = 10p_0$ (using the $c(2)$ corresponding to even $\chi$, since they are larger in all cases), we may adjust the constants in Theorem \ref{1over2r} to those in Table \ref{newconstants} for $p_0 \leq 10^9$. In order to minimize the upper bound in \eqref{nbounds2} in the proof of Theorem \ref{1over2r}, we should use these adjusted constants when stepping from $r = 2$ to $r=3$. Using the adjusted constants for $r=2$, we determine the constants for $r =3$, as provided below. One may wish to verify that establishing better constants in Theorem \ref{1overr} for $r=3$ does not admit the same adjustments. 
\end{remark}

\end{proof}

As in \cite[Corollary 1.8]{trevino2015}, we can omit the condition on $N$ in Theorem \ref{1over2r} by using the Burgess bound in Theorem \ref{1overr} with $r=2$. The advantage of having a smaller constant is that our results are now valid for primes as small as $10^8$ (previously we could only take primes as small as $10^{10}$).

\begin{proof}[Proof of Theorem \ref{target}]
We need to establish that if $N \geq 2p^{\frac{1}{2}+\frac{1}{4r}}$, then Theorem \ref{1overr} implies the inequality in Theorem \ref{1over2r}. For $p \geq p_0$, Corollary \ref{1overr} implies

\[\left| S_\chi(M,N) \right| < c(2;p_0)N^{\frac{1}{2}}p^\frac{3}{16}\sqrt{\log{p}}.\]
For $r \geq 3$, this inequality implies 
\[\left| S_\chi(M,N) \right| < C(r;p_0)N^{1-\frac{1}{r}}p^\frac{r+1}{4r^2}\left(\log{p}\right)^\frac{1}{2r}\]
whenever
\begin{equation*}
    N > \left(\frac{c(2;p_0)}{C(r;p_0)}\right)^\frac{2r}{r-2}p^\frac{3r+2}{8r} \left(\log{p}\right)^\frac{r-1}{r-2}.
\end{equation*}
Now, if
\[N \leq \left(\frac{c(2;p_0)}{C(r;p_0)}\right)^\frac{2r}{r-2}p^\frac{3r+2}{8r}\]
then 
\[N < 2p^{\frac{1}{2}+\frac{1}{4r}}\]
whenever 
\begin{equation}\label{factcheck}
\left(\frac{c(2;p_0)}{C(r;p_0)}\right)^\frac{2r}{r-2} < \frac{2p^\frac{1}{8}}{\left(\log{p}\right)^\frac{r-1}{r-2}}.
\end{equation}
Taking all combinations of $p_0$ and $r$ available in Tables \ref{newconstants} and \ref{2constants}, one can verify that \eqref{factcheck} holds for $p > p_0 \geq 10^8$ when $3 \leq r \leq 6$ or $p > p_0 \geq 10^9$ when $7 \leq r \leq 10$.  
\end{proof}

\begin{remark}
One could improve the ranges on $p_0$ in Theorem \ref{target} by making the constant $2$ in the condition on $N$ worse. Since this would be a less restrictive condition, it would result in larger $C(r)$. However, it appears the adjustment that would be necessary to extend Theorem \ref{target} to $p_0 \geq 10^5$ would cause the constants to be much larger than desirable. For this reason, we make no adjustment and accept $p_0 \geq 10^9$ in Theorem \ref{target}.  
\end{remark}

\section{Least $k$th Power Non-Residues}

We wish to improve upon the result of Trevi\~no \cite[Theorem 1.10]{trevino2015}, which established that there is a $k$th power non-residue $\imod{p}$ smaller than $p^\frac{1}{6}$ for all primes greater than $10^{4732}$. The approach used by Trevi\~no could be used to establish a result for $p^\alpha$ up to the bound $\alpha > \tfrac{1}{4\sqrt{e}}$. We apply our constants in Trevi\~no's proof to establish a similar result for $p^\alpha$ where $\alpha = \tfrac{1}{4}, \tfrac{1}{5}, \tfrac{1}{6}$. The proof starts with the following lower bound for character sums. 

\begin{lemma}{\textnormal{\cite[Lemma 5.3]{trevino2015}}}\label{vinotrick}
Let $x \geq 286$ be a real number, and let $y= x^{\frac{1}{\sqrt{e}}+\delta}$ for some $\delta > 0$. Let $\chi$ be a non-principal character $\imod{p}$ for some prime $p$. If $\chi(n) =1$ for all $n \leq y$, then 
\[\left| \sum_{n\leq x} \chi(n) \right| \geq x\left(2\log(\delta \sqrt{e} +1) - \frac{1}{\log^2{x}} - \frac{1}{\log^2{y}} - \frac{1}{x}\right).\]
\end{lemma}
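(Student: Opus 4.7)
The approach is the classical Vinogradov smooth-number trick, made explicit. The key multiplicative observation is that if $\chi(n)=1$ for every $n \leq y$, then $\chi(n)=1$ for every $y$-smooth integer $n$, by factoring such an $n$ into primes $\leq y$ and invoking complete multiplicativity of $\chi$. So the $y$-smooth integers up to $x$ contribute exactly $\Psi(x,y)$ (the count of $y$-smooth integers up to $x$) to the character sum, and the task reduces to giving a strong lower bound on $\Psi(x,y)$.

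I would proceed by splitting
\[
\sum_{n \leq x} \chi(n) \;=\; \Psi(x,y) \;+\; \sum_{\substack{n \leq x \\ n \text{ not } y\text{-smooth}}} \chi(n),
\]
and bounding the second sum in absolute value by $\lfloor x \rfloor - \Psi(x,y)$ (using $|\chi(n)| \leq 1$), which gives
\[
\Bigl| \sum_{n \leq x} \chi(n) \Bigr| \;\geq\; 2\Psi(x,y) - \lfloor x \rfloor.
\]
Because $y = x^{1/\sqrt{e} + \delta} > x^{1/2}$, each non-$y$-smooth integer $n \leq x$ has exactly one prime factor $p > y$ and it appears to the first power (any second such factor would force $n > y^2 > x$). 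Hence no inclusion--exclusion is needed, and
\[
\Psi(x,y) \;=\; \lfloor x \rfloor - \sum_{y < p \leq x} \lfloor x/p \rfloor.
\]
Inserting this and using $\lfloor x/p \rfloor \leq x/p$ yields
\[
\Bigl| \sum_{n \leq x} \chi(n) \Bigr| \;\geq\; x - 1 - 2x \sum_{y < p \leq x} \frac{1}{p}.
\]

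Next I would apply an explicit Mertens-type estimate, such as Rosser--Schoenfeld, of the shape
\[
\Bigl| \sum_{p \leq t} \frac{1}{p} - \log\log t - M \Bigr| \;\leq\; \frac{1}{2 \log^2 t},
\]
the hypothesis $x \geq 286$ being the standard threshold at which such bounds become available. Subtracting the estimates at $t=y$ and $t=x$ gives
\[
\sum_{y < p \leq x} \frac{1}{p} \;\leq\; \log\!\bigl(\tfrac{\log x}{\log y}\bigr) + \frac{1}{2\log^2 x} + \frac{1}{2\log^2 y}.
\]
A short algebraic simplification using $\log x/\log y = \sqrt{e}/(1+\delta\sqrt{e})$ turns the main contribution into $1 - 2\log(\log x/\log y) = 2\log(1+\delta\sqrt{e})$; dividing through by $x$ then produces the claimed inequality.

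The real work --- rather than a conceptual obstacle --- lies in bookkeeping the error terms so that the constants from the Mertens bound and the floor correction combine exactly into $\tfrac{1}{\log^2 x} + \tfrac{1}{\log^2 y} + \tfrac{1}{x}$ rather than constant multiples thereof. If $y$ is too small for the stated form of Mertens to apply directly at $t=y$, one substitutes a crude direct estimate for $\sum_{p\leq y} 1/p$ that still has the $1/\log^2 y$ shape. The factor $2$ out front and the precise constant $\tfrac{1}{\sqrt{e}}$ in the exponent of $y$ both emerge naturally from the identity $1 - 2\log u = 2\log(1+\delta\sqrt{e})$, so no further optimization is required.
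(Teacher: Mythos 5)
The paper itself does not reprove this lemma --- it is quoted directly from Treviño's \cite[Lemma 5.3]{trevino2015} --- so I compare your attempt against the standard proof of that result. Your proposal is exactly the right argument: the Vinogradov smooth-number trick, sharpened by the observation that $y = x^{1/\sqrt e+\delta} > \sqrt x$ makes the non-$y$-smooth integers up to $x$ precisely the multiples of a single prime $q$ with $y < q \leq x$, so that $\Psi(x,y) = \lfloor x\rfloor - \sum_{y<q\leq x}\lfloor x/q\rfloor$ with no inclusion--exclusion, followed by the explicit Rosser--Schoenfeld version of Mertens' second theorem. The algebra assembles correctly: $\log(\log x/\log y) = \tfrac12 - \log(1+\delta\sqrt e)$, the factor $2$ coming from the reverse triangle inequality turns the two $\tfrac{1}{2\log^2 t}$ Mertens errors into $\tfrac{1}{\log^2 x}+\tfrac{1}{\log^2 y}$, and $\lfloor x\rfloor \geq x-1$ supplies the $\tfrac1x$.

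The one point you should tighten rather than wave at: you invoke the Rosser--Schoenfeld lower bound at $t=y$, which is stated for $t\geq 286$, yet the lemma's hypothesis gives only $x\geq 286$, so $y$ can be as small as roughly $286^{1/\sqrt e}\approx 31$. You acknowledge this, but a ``crude direct estimate'' is not enough here, because the final error term must be \emph{exactly} $\tfrac{1}{\log^2 y}$; any loss of the $\tfrac12$ constant in the Mertens bound would spoil the stated form. The fix is simply to check the inequality $\sum_{q\leq y}1/q > \log\log y + M - \tfrac{1}{2\log^2 y}$ numerically for the finitely many integers $y$ below $286$ (it does hold), or to note that in every application $y$ is far above $286$ anyway. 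Aside from making that remark precise, your proof is complete and is the same proof as Treviño's.
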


We are now in a position to prove Theorem \ref{kthpowers}.
\begin{proof}[Proof of Theorem \ref{kthpowers}]
Let $k$ be a divisor of $p-1$ and $\chi$ be a non-principal character modulo $p$ of order $k$. Then, if $\chi(n) \neq 1$, $n$ is a $k$th power non-residue modulo $p$. Fix $\alpha$ to be one of $\tfrac{1}{4}$, $\tfrac{1}{5}$, or $\tfrac{1}{6}$. Let $x \geq 286$ and $y = x^{\frac{1}{\sqrt{e}}+\delta} = p^\alpha$, where $\delta > 0$ will be a constant that is determined when $x$ is fixed later in the proof. If we suppose that for all $n \leq y$ we have $\chi(n) = 1$, then by comparing Lemma \ref{vinotrick} with Theorem \ref{target}, we have that

\[C(r)x^{1-\frac{1}{r}}p^\frac{r+1}{4r^2}\left(\log{p}\right)^\frac{1}{2r}  \geq x\left(2\log(\delta \sqrt{e} +1) - \frac{1}{\log^2{x}} - \frac{1}{\log^2{y}} - \frac{1}{x}\right).\]
If we take $x = p^{\frac{1}{4} + \frac{1}{2r}}$, then we have
\begin{equation}\label{contra}
    C(r)p^{\frac{\log{log{p}}}{2r\log{p}}-\frac{1}{4r^2}}\geq 2\log(\delta \sqrt{e} +1) - \frac{1}{\log^2{x}} - \frac{1}{\log^2{y}} - \frac{1}{x}.
\end{equation}

One observes that, as $p$ increases, the left-hand side of \eqref{contra} eventually decreases to 0, while the right-hand side eventually increases to $2\log(\delta \sqrt{e} +1)$. Therefore, we can obtain a contradiction for $p$ large enough. For each choice of $\alpha$, we will need to take $r$ large enough so that $\delta >0$. The cases $\alpha = \tfrac{1}{4}$ and $\alpha = \tfrac{1}{5}$ are easier, since $\delta >0$ for $r \geq 4$ or $r \geq 7$, respectively. Taking \eqref{contra} with $\alpha = \tfrac{1}{4}$ and $r=4$ gives us $\delta = 0.060136\ldots$ and $C(4;10^{75}) = 1.9011$. With these values, we determine that \eqref{contra} fails when $p \geq 10^{83}$. Taking \eqref{contra} with $\alpha = \tfrac{1}{5}$  and $r = 7$ gives us $\delta = 0.015691\ldots$ and $C(7;10^{75}) = 1.5947$. With these values, we determine that \eqref{contra} fails when $p \geq 10^{334}$. 

In the case of $\alpha = \tfrac{1}{6}$, we take a little more care to ensure a good result. For this case, $\delta$ is only positive once $r \geq 21$, so we must compute $C(r)$ for larger $r$ than were given in Theorem \ref{target}. Trevi\~no gives us a rough sense of how large the primes need to be for Theorem \ref{kthpowers} to hold when $\alpha = \tfrac{1}{6}$. That is, he shows $p \geq 10^{4732}$, which suggests that computing $C\left(r;10^{3500}\right)$ should be suitable for our purposes. We have done so for $2 \leq r \leq 25$ and compiled these results in Table \ref{longconstants}.

\begin{table}[ht]
\centering
\caption{Constants $C\left(r;10^{3500}\right)$.}
\label{longconstants}
\begin{tabular}{rrrrrrr}
\cline{2-7}
$r$                         & \cellcolor[HTML]{EFEFEF}2  & \cellcolor[HTML]{EFEFEF}3  & \cellcolor[HTML]{EFEFEF}4  & \cellcolor[HTML]{EFEFEF}5  & \cellcolor[HTML]{EFEFEF}6  & \cellcolor[HTML]{EFEFEF}7  \\ \cline{2-7} 
$C(r)$ & 2.8470                     & 2.1051                     & 1.8821                     & 1.7492                     & 1.6561                     & 1.5859                     \\ \cline{2-7} 
                            &                            &                            &                            &                            &                            &                            \\ \cline{2-7} 
$r$                         & \cellcolor[HTML]{EFEFEF}8  & \cellcolor[HTML]{EFEFEF}9  & \cellcolor[HTML]{EFEFEF}10 & \cellcolor[HTML]{EFEFEF}11 & \cellcolor[HTML]{EFEFEF}12 & \cellcolor[HTML]{EFEFEF}13 \\ \cline{2-7} 
$C(r)$ & 1.5308                     & 1.4862                     & 1.4492                     & 1.4180                     & 1.3913                     & 1.3681                     \\ \cline{2-7} 
                            &                            &                            &                            &                            &                            &                            \\ \cline{2-7} 
$r$                         & \cellcolor[HTML]{EFEFEF}14 & \cellcolor[HTML]{EFEFEF}15 & \cellcolor[HTML]{EFEFEF}16 & \cellcolor[HTML]{EFEFEF}17 & \cellcolor[HTML]{EFEFEF}18 & \cellcolor[HTML]{EFEFEF}19 \\ \cline{2-7} 
$C(r)$ & 1.3478                     & 1.3298                     & 1.3138                     & 1.2995                     & 1.2865                     & 1.2747                     \\ \cline{2-7} 
                            &                            &                            &                            &                            &                            &                            \\ \cline{2-7} 
$r$                         & \cellcolor[HTML]{EFEFEF}20 & \cellcolor[HTML]{EFEFEF}21 & \cellcolor[HTML]{EFEFEF}22 & \cellcolor[HTML]{EFEFEF}23 & \cellcolor[HTML]{EFEFEF}24 & \cellcolor[HTML]{EFEFEF}25 \\ \cline{2-7} 
$C(r)$ & 1.2640                     & 1.2541                     & 1.2450                     & 1.2367                     & 1.2289                     & 1.2217                     \\ \cline{2-7} 
\end{tabular}
\end{table}

For each $r \geq 21$ in Table \ref{longconstants}, we can check inequality \eqref{contra} using $C(r)$ and the appropriate $\delta$ to determine which $r$ gives us the best possible contradiction. It happens that this is when $r=23$, where we have $\delta = 0.006802\ldots$ and \eqref{contra} is false for all $p \geq 10^{3872}$. 
\end{proof}

\section{Norm-Euclidean Cyclic Fields}
In \cite[Proposition 2.4]{lezowski2017}, Burgess' bound is used to provide unconditional upper bounds on the size of the conductor of norm-Euclidean cyclic fields with prime degree $3 \leq l \leq 100$. Here, we update these bounds using the improved Burgess constants.

\begin{proof}[Proof of Theorem \ref{conductorbounds}]Following the proof of \cite[Proposition 5.7]{mcgown2012}, let $100 < q_1 < q_2$ be primes and define $D_2(r)$ by
\[D_2(r) \geq \frac{K_1\left(1 + C(r)^{-1}\right)}{K_2}C(r),\]
where \[K_1 = \left(1+q_1^{\frac{1}{k-1}}\right)\left(1+q_2^{\frac{1}{k-1}}\right)\mbox{ and }K_2 = \left(1-q_1^{-1}\right)\left(1-q_2^{-1}\right).\] In the proof in \cite[pp. 2547-48]{lezowski2017}, we may take $C(r) = C(r;10^{40})$, where $r=4$ for $l=5,7$ and $r=3$ otherwise. Then, by inequality (8.1) in \cite{lezowski2017}, the bound on the conductor for $l > 3$ is given by the smallest $f$ for which
\begin{equation}\label{conductorborder}
f \geq 2.7 D_2(r)^r\left(l-1\right)^r f^\frac{3r+1}{4r}(\log{f})^\frac{5}{2}.
\end{equation}
For $l = 3$, we use $r = 4$ and 
\[f \geq 8 D_2(r)^r 2^r f^\frac{3r+1}{4r}(\log{f})^\frac{5}{2}.\]
We computed $C(3;10^{40}) = 2.1590344\ldots$ and $C(4;10^{40}) = 1.9146092\ldots$, which yields $D_2(3) = 3.5239$ and $D_2(4) = 3.1608$ (rounded up). Then we determined where \eqref{conductorborder} was true to establish the bounds in Table \ref{NEtable}. 

\end{proof}

\section{Acknowledgements} 

Many thanks to Tim Trudgian for bringing this project to my attention, and his supervision throughout. I was honoured to have been able to discuss this project in person with Kevin McGown, Enrique Trevi\~no, and Bryce Kerr. Thanks to them for their time and interest. Thank you to Amir Akbary and Jan-Christoph Schlage-Puchta for their comments and suggestions regarding an earlier version of this paper. Additional thanks to Matteo Bordignon, Ethan Lee, Shehzad Hathi, Thomas Morrill, and Aleksander Simonic for a variety of productive discussions throughout this project. 
\newpage

\bibliography{bibliographic.bib} 
\bibliographystyle{amsplain}

\end{document}